\numberwithin{equation}{section}
\newtheorem{theorem}{Theorem}[section]
\newtheorem{lemma}[theorem]{Lemma}
\newtheorem{corollary}[theorem]{Corollary}
\theoremstyle{definition}
\newtheorem{definition}[theorem]{Definition}
\newtheorem{example}[theorem]{Example}
\newtheorem{remark}[theorem]{Remark}
\begin{document}

\begin{center}
{\Large \textbf{Uniqueness  and  Ulam-Hyers-Mittag-Leffler stability results for the delayed fractional multi-terms differential equation involving the $\Phi$--Caputo  fractional Derivative}%
}

\vskip.30in
{\large \textbf{Choukri Derbazi $^{1,*}$ \,and \, Zidane Baitiche$^1$} }  \\[2mm]
{\small $^{1}$Department of Mathematics, Faculty of Exact Sciences, University Fr\`eres Mentouri Constantine 25000, Algeria.\\ Email: choukriedp@yahoo.com, \;baitichezidane19@gmail.com
}
\end{center}

\vskip 4mm

\noindent {\small \textbf{Abstract.}
The principal aim of the present paper is to establish the uniqueness and Ulam-Hyers Mittag-Leffler (UHML) stability of solutions for a new class of multi-terms fractional time-delay differential equations in the context of the $\Phi$-Caputo fractional derivative. To achieve this purpose, the generalized Laplace transform method alongside facet with properties of the Mittag-Leffler functions (M-LFs), are utilized to give a new representation formula of the solutions for the aforementioned problem. Besides that, the uniqueness of the solutions of the considered problem is also proved by applying the well-known Banach contraction principle coupled with the  $\Phi$-fractional Bielecki-type norm.   While the $\Phi$-fractional Gronwall type inequality and the Picard operator (PO)  technique combined with abstract Gronwall lemma are used to prove the UHML stability results for the proposed problem. Lastly,  an example is offered to assure the validity of the obtained theoretical results.
}
\vskip.15in \footnotetext{\textbf{AMS 2010 Mathematics Subject Classification}
: 34A08;  26A33; 34D10; 45N05.} \footnotetext{\textbf{\ Keywords}:
$\Phi $--Caputo fractional  derivative,  time-delay differential equations, generalized Laplace transform, uniqueness, Ulam-Hyers Mittag-Leffler (UHML) stability, $\Phi$-fractional Bielecki-type norm,  $\Phi$-fractional Gronwall type inequality.}

\footnotetext{\textbf{$^{*}$ Corresponding author: email}:
choukriedp@yahoo.com.} \baselineskip=14pt

\setcounter{section}{0} \numberwithin{equation}{section}
\section{Introduction}
In recent years,  the subject of fractional differential equations (FDEs) has aroused considerable attention among scientists. The mentioned branch has many applications in diverse disciplines of sciences and engineering.  For more details, see \cite{Hilfer,b0,Pud,Tarasov1}. In 2017, a novel fractional derivative with respect to another function $\Phi$  was formulated by Almeida. This new operator is called the $\Phi$-Caputo fractional derivative. In a subsequent year, the concerned derivative was generalized by Sousa and was named the $\Phi$-Hilfer fractional derivative. These novel operators can be regarded as a unification of some well known fractional operators in the literature.  For a detailed discussion on the basic theory of $\Phi$-Caputo and  $\Phi$-Hilfer fractional derivatives, we refer to the recent papers  \cite{Almeida1,SousaPsi} and the references therein. Currently, many mathematicians have addressed the existence and uniqueness of solutions as well as different types of  Ulam\rq{}s  stabilities of nonlinear FDEs involving various categories of fractional derivatives with the help of fixed point theory.  For details, we refer the reader to \cite{AlmeidaM,AlmeidaRE,Kharade,Kucche,Kucche1,
Kucche2,Liu,Mali,SousaPsiBE,
VanterlerB,Sousaimpulsive,SousaUlam,J.WangF,WangMGS}.

To the best of our knowledge, the uniqueness and the UHML stability of solutions for the delayed fractional multi-terms differential equation involving the $\Phi$--Caputo fractional derivative is not yet investigated. So getting motivation from the results proved in \cite{Otrocol,WangMGS}, in this paper, we mainly focus on the  uniqueness  and  the UHML stability of  solutions   for the following $\Phi$--Caputo fractional multi-terms differential equation ($\Phi$--Caputo FMTDE) with a finite delay of the form 
\begin{equation}
\begin{cases}
{^{c}\mathbb{D}}_{\mathfrak {m}^{+}}^{\mu ;\Phi }\mathfrak {z} (\ell )+\varrho\, {^{c}\mathbb{D}}_{\mathfrak {m}^{+}}^{\kappa ;\Phi }\mathfrak {z} (\ell )=\mathbb{Q}\bigl(\ell ,\mathfrak {z}
(\ell ), \mathfrak {z}(\mathfrak {f}
(\ell ))\bigr),\;\ell \in \Omega:=[\mathfrak {m} ,\mathfrak {n}], \\
\mathfrak {z}(\ell)=\alpha(\ell), \; \ell\in  [\mathfrak {m} -\sigma, \mathfrak {m}],
\end{cases}
\label{aa}
\end{equation}
where ${^{c}\mathbb{D}}_{\mathfrak {m}^{+}}^{\mu ;\Phi }$ and ${^{c}\mathbb{D}}_{\mathfrak {m}^{+}}^{\kappa ;\Phi }$ denote the $\Phi$-Caputo fractional derivatives, with the orders $\mu$ and $\kappa$ respectively such that $0<\kappa<\mu\leq1, \varrho, \sigma >0$, $ \mathbb Q\in C(\Omega\times\mathbb R^2, \mathbb R), \mathfrak {f} \in C(\Omega, [\mathfrak {m} -\sigma, \mathfrak {n}]), \mathfrak {f}
(\ell )\leq \ell, \alpha \in C(\ [\mathfrak {m} -\sigma, \mathfrak {m}], \mathbb R)$ and $\mathfrak {m}, \mathfrak {n} \in \mathbb R^{+}$, such that $\mathfrak {m}<\mathfrak {n}$.

It is worth noting that the results obtained in this paper are generalizations and partial continuation of some results obtained in \cite{AlmeidaRE,Liu1,Otrocol,Peng,C.Wang,WangMGS}. 

The outline of the paper is as follows. In Section \ref{SE2} we introduce some basic concepts needed throughout this paper.  Section \ref{Sec3} is devoted to establishing the main results in which the  uniqueness of the solutions  for the problem \eqref{aa} can be obtained under  the famous Banach\rq{}s Banach fixed point theorem along with the  $\Phi$-fractional Bielecki-type norm.  Then, we present the UHML stability result of the problem \eqref{aa} in Section \ref{Sec4}. Finally, in Section \ref{Sec5}, we give an example to verify our main result.
\section{Basic concepts}\label{SE2}
In this section, we introduce some necessary definitions and preliminary facts which will be used throughout this paper.

Let us consider on  the space  $\mathcal {X}:=C(\Omega, \mathbb R)$ the $\Phi$-fractional Bielecki-type norm  $\|\cdot\|_{\mathfrak C, \mathfrak{B}, \mu} $ given by previous studies \cite{SousaPsiBE,VanterlerB}
and defined by
\begin{align*}
\|\mathfrak{z}\|_{\mathcal X, \mathfrak{B}, \mu}:=\sup_{\ell \in  [\mathfrak {m}, \mathfrak {n}]}\frac{|\mathfrak{z}(\ell)|}{\mathbb M_{\mu}
\bigl(\beta(\Phi(\ell)-\Phi(\mathfrak {m}))^{\mu}\bigr)}, \quad \beta>0.
\end{align*}
Then,  $\bigl(\mathcal X, \|\cdot\|_{\mathcal X, \mathfrak{B}, \mu}\bigr)$ is a Banach space. In addition, let us denote by  $\mathcal {Y}:= C( [\mathfrak {m} -\sigma, \mathfrak {n}], \mathbb R)$   the   Banach space of  all continuous functions $y$ from $[\mathfrak {m} -\sigma, \mathfrak {n}]$  into $  \mathbb R$  equipped with the norm
\begin{align}\label{Bielecki}
\|\mathfrak{z}\|_{\mathcal Y, \mathfrak{B}, \mu}:=\sup_{\ell \in  [\mathfrak {m} -\sigma, \mathfrak {n}]}\frac{|\mathfrak{z}(\ell)|}{\mathbb M_{\mu}
\bigl(\beta(\Phi(\ell)-\Phi(\mathfrak {m}))^{\mu}\bigr)}, \quad \beta>0.
\end{align}
It's clear that  $
\|\mathfrak{z}\|_{\mathcal X, \mathfrak{B}, \mu}\leq \|\mathfrak{z}\|_{\mathcal Y, \mathfrak{B}, \mu}$.

Now, we recall the definition of the Mittag--Leffler functions (M-LFs).
\begin{definition}[\protect\cite{Gorenflo}] For $\mathrm p ,\mathrm q >0$ and $\;\vartheta\in \mathbb{R}$, the M-LFs of one and two parameters are given by
\begin{equation}\label{mgf}
\mathbb{M}_{\mathrm p }(\vartheta)=\sum_{k=0}^{\infty }\frac{\vartheta^{k}}{\Gamma (\mathrm p k+1)},\quad \mathbb{M}_{\mathrm p ,\mathrm q}(\vartheta)=\sum_{k=0}^{\infty }\frac{\vartheta^{k}}{\Gamma (\mathrm p k+\mathrm q
)}.
\end{equation}%
Clearly, $\mathbb{M}_{\mathrm p,1}(\vartheta )=\mathbb{M}_{\mathrm p}(\vartheta)$.
\end{definition}

\begin{lemma}[\protect\cite{Gorenflo,J.WangF}]
\label{PMitagg} Let $\mathrm p\in (0, 1) ,\mathrm q  >\mathrm p$ be arbitrary and $\vartheta \in \mathbb{R}$. The  functions   $\mathbb{M}_{\mathrm p}, \mathbb{M}_{\mathrm p, \mathrm p}$ and $\mathbb{M}_{\mathrm p, \mathrm q}$ are nonnegative and have
the following properties:
\begin{enumerate}
\item $\mathbb{M}_{\mathrm p}(\vartheta)\leq 1, \mathbb{M}_{\mathrm p, \mathrm q}(\vartheta)\leq \frac{1}{%
\Gamma (\mathrm q)} ,$ \quad for any $\vartheta<0$,
\item $\mathbb{M}_{\mathrm p, \mathrm q}(\vartheta)=\vartheta\mathbb{M}_{\mathrm p, \mathrm p+\mathrm q}(\vartheta)+ \frac{1}{%
\Gamma (\mathrm q)},$ \quad $\mathrm p, \mathrm q>0, \vartheta \in \mathbb{R}.$
\end{enumerate}
\end{lemma}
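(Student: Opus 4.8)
The plan is to establish the algebraic recurrence of item~2 first by a direct manipulation of the defining series, then to isolate the nonnegativity of the three functions as the single substantive point, and finally to read off the upper bounds of item~1 as an easy consequence of item~2 together with that nonnegativity.

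For item~2, I would start from $\mathbb{M}_{\mathrm p,\mathrm q}(\vartheta)=\sum_{k=0}^{\infty}\vartheta^{k}/\Gamma(\mathrm p k+\mathrm q)$, split off the $k=0$ term (which contributes $1/\Gamma(\mathrm q)$), and reindex the remaining tail via $k\mapsto k+1$. The tail then becomes $\vartheta\sum_{k=0}^{\infty}\vartheta^{k}/\Gamma(\mathrm p k+\mathrm p+\mathrm q)=\vartheta\,\mathbb{M}_{\mathrm p,\mathrm p+\mathrm q}(\vartheta)$, which is exactly the claimed identity. Absolute convergence of all the series involved (the Gamma factor grows faster than any geometric term) justifies the term-by-term splitting and reindexing for every $\vartheta\in\mathbb{R}$, so item~2 holds with no sign restriction.

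For the nonnegativity, note first that when $\vartheta\geq 0$ all three functions are sums of nonnegative terms, hence nonnegative; the only nontrivial range is $\vartheta<0$. Here I would invoke the complete-monotonicity theory of the Mittag--Leffler functions (Pollard, Schneider): for $0<\mathrm p\leq 1$ and $\mathrm q\geq \mathrm p$ the map $x\mapsto\mathbb{M}_{\mathrm p,\mathrm q}(-x)$ is completely monotone on $[0,\infty)$, equivalently it admits an integral representation $\mathbb{M}_{\mathrm p,\mathrm q}(-x)=\int_{0}^{\infty}e^{-xr}\varphi_{\mathrm p,\mathrm q}(r)\,dr$ with a nonnegative spectral density $\varphi_{\mathrm p,\mathrm q}$. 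The hypotheses $\mathrm p\in(0,1)$ and $\mathrm q>\mathrm p$, together with the boundary cases $\mathrm q=\mathrm p$ for $\mathbb{M}_{\mathrm p,\mathrm p}$ and $\mathrm q=1\geq\mathrm p$ for $\mathbb{M}_{\mathrm p}=\mathbb{M}_{\mathrm p,1}$, fall inside this regime, so each of the three functions is nonnegative on all of $\mathbb{R}$.

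Granting nonnegativity, item~1 follows painlessly from item~2: for $\vartheta<0$ the factor $\mathbb{M}_{\mathrm p,\mathrm p+\mathrm q}(\vartheta)$ is nonnegative (since $\mathrm p+\mathrm q\geq\mathrm p$), hence $\vartheta\,\mathbb{M}_{\mathrm p,\mathrm p+\mathrm q}(\vartheta)\leq 0$, and the identity $\mathbb{M}_{\mathrm p,\mathrm q}(\vartheta)=\vartheta\,\mathbb{M}_{\mathrm p,\mathrm p+\mathrm q}(\vartheta)+1/\Gamma(\mathrm q)$ gives $\mathbb{M}_{\mathrm p,\mathrm q}(\vartheta)\leq 1/\Gamma(\mathrm q)$; taking $\mathrm q=1$ yields $\mathbb{M}_{\mathrm p}(\vartheta)\leq 1$. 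I expect the only genuine obstacle to be the nonnegativity for $\vartheta<0$: there the defining series is alternating, so no elementary comparison fixes the sign, and one must rely on the nonnegative spectral density (complete monotonicity) of $\mathbb{M}_{\mathrm p,\mathrm q}$. Everything else is bookkeeping.
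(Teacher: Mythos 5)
Your proposal is correct. Note that the paper itself offers no proof of this lemma --- it is imported verbatim from the cited references (Gorenflo et al.\ and Wang--Fe\v{c}kan--Zhou) --- so there is no in-paper argument to compare against; what you have written is a sound reconstruction of the standard proof. The series manipulation for item~2 is valid for all $\vartheta\in\mathbb{R}$ since $\mathbb{M}_{\mathrm p,\mathrm q}$ is entire, and your derivation of item~1 from item~2 plus nonnegativity is clean (the needed factor $\mathbb{M}_{\mathrm p,\mathrm p+\mathrm q}(\vartheta)\geq 0$ is covered because $\mathrm p+\mathrm q\geq\mathrm p$). You are also right to flag that the only substantive point is nonnegativity for $\vartheta<0$: the series is alternating there, and the standard route is exactly the one you cite, namely Pollard's complete monotonicity of $\mathbb{M}_{\mathrm p}(-x)$ for $0<\mathrm p\leq 1$ and Schneider's characterization that $x\mapsto\mathbb{M}_{\mathrm p,\mathrm q}(-x)$ is completely monotone precisely when $0<\mathrm p\leq 1$ and $\mathrm q\geq\mathrm p$, which covers all three functions in the statement. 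The one thing to be aware of is that this step is not self-contained --- complete monotonicity is itself a nontrivial theorem --- but since the paper treats the entire lemma as a black box from the literature, leaning on Pollard/Schneider is entirely appropriate.
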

Let $\Phi \colon
\Omega\longrightarrow \mathbb{R}$ be an increasing differentiable
function such that $\Phi ^{\prime }(\ell )\neq 0$, for all $\ell \in\Omega.$
\begin{definition}[\protect\cite{Almeida1,b0}]
\label{r} The R-L fractional integral of
order $\mu>0$ for an integrable function $\mathfrak {z} \colon\Omega%
\longrightarrow \mathbb{R}$ with respect to $\Phi $ is described by
\begin{equation*}
\mathbb{I}_{\mathfrak {m}^{+}}^{\mu ;\Phi }\mathfrak {z} (\ell )=\int_{\mathfrak {m}}^{\ell}\frac{\Phi
^{\prime }(\eta)(\Phi (\ell)-\Phi (\eta))^{\mu-1}}{\Gamma(\mu)}\mathfrak {z} (\eta )%
\mathrm{d}\eta,  \label{RLFP}
\end{equation*}
\end{definition}

where $\Gamma (\mu)=\int_{0}^{+\infty }\ell ^{\mu -1}e^{-\ell }\mathrm{d}\ell
,\quad \mu >0$ is the Gamma function.


\begin{definition}[\protect\cite{Almeida1}]
Let $\Phi ,\mathfrak {z} \in C^{n}(\Omega,\mathbb{R})$%
. The Caputo fractional derivative of $\mathfrak {z}$ of order $ n-1<\mu <n $ with respect to $\Phi $
is defined by
\begin{equation*}
{^{c}\mathbb{D}}_{\mathfrak {m}^{+}}^{\mu ;\Phi }\mathfrak {z} (\ell )=\mbox{ }\mathbb{I}%
_{\mathfrak {m}^{+}}^{n-\mu ;\Phi }\mathfrak {z} _{\Phi }^{[n]}(\ell ),
\end{equation*}%
where $n=[\mu]+1$ for $\mu \notin \mathbb{N}$, $n=\mu $ for $\mu \in
\mathbb{N}$, and $\mathfrak {z} _{\Phi }^{[n]}(\ell )=\left( \frac{\frac{d}{d\ell }}{\Phi
^{\prime }(\ell )}\right) ^{n}\mathfrak {z} (\ell ).$
\end{definition}
Some basic properties of the $\Phi$-fractional operators are listed in the following Lemma.
\begin{lemma}[\protect\cite{Almeida1,AlmeidaM}]
\label{LMA2} Let $\mu ,\kappa, \beta >0,$ and $\mathfrak {z} \in C(\Omega,\mathbb{R})$.
Then for each $\ell \in\Omega$,

\begin{enumerate}
\item ${^{c}\mathbb{D}}_{\mathfrak {m}^{+}}^{\mu ;\Phi }\mathbb{I}_{\mathfrak {m}^{+}}^{\mu ;\Phi
}\mathfrak {z} (\ell )=\mathfrak {z} (\ell )$,

\item $\mathbb{I}_{\mathfrak {m}^{+}}^{\mu ;\Phi }{^{c}\mathbb{D}}_{\mathfrak {m}^{+}}^{\mu ;\Phi
}\mathfrak {z} (\ell )=\mathfrak {z} (\ell )-\mathfrak {z} (\mathfrak {m}),\quad 0<\mu \leq 1$,

\item $\mathbb{I}_{\mathfrak {m}^{+}}^{\mu ;\Phi }(\Phi (\ell)-\Phi (\mathfrak {m}))^{\kappa -1}=\frac{%
\Gamma (\kappa )}{\Gamma (\kappa +\mu )}(\Phi (\ell)-\Phi (\mathfrak {m}))^{\kappa+\mu -1},$

\item ${^{c}\mathbb{D}}_{\mathfrak {m}^{+}}^{\mu ;\Phi }(\Phi (\ell)-\Phi (\mathfrak {m}))^{\kappa -1}=%
\frac{\Gamma (\kappa )}{\Gamma (\kappa -\mu )}(\Phi (\ell)-\Phi (\mathfrak {m}))^{\kappa-\mu -1},$
\item $ \mathbb  I_{\mathfrak {m}^{+}}^{\mu ;\Phi }\left(\mathbb M_{\mu}\bigl(\beta(\Phi(\ell)-\Phi (\mathfrak {m}))^{\mu}\right)=\frac{1}{\beta}\left(\mathbb M_{\mu}\bigl(\beta(\Phi(\ell)-\Phi (\mathfrak {m}))^{\mu}-1\right),$\\
in particular  if we take $\beta=1$  we can get the following estimate
\item $ \mathbb  I_{\mathfrak {m}^{+}}^{\mu ;\Phi }\left(\mathbb M_{\mu}\bigl((\Phi(\ell)-\Phi (\mathfrak {m}))^{\mu}\right)\leq \mathbb M_{\mu}\bigl((\Phi(\ell)-\Phi (\mathfrak {m})^{\mu}\bigr).$
\end{enumerate}
\end{lemma}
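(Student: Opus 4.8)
The plan is to establish the six identities in order, leaning on the fact that the $\Phi$-operators are conjugate to the classical Riemann--Liouville and Caputo operators through the change of variable $s=\Phi(\eta)$. First I would dispose of items (1) and (2), which are exactly the composition rules of $\Phi$-fractional calculus: writing $\widetilde{\mathfrak z}=\mathfrak{z}\circ\Phi^{-1}$ (legitimate since $\Phi$ is increasing with $\Phi'\neq 0$), one checks that $\mathbb{I}_{\mathfrak{m}^+}^{\mu;\Phi}\mathfrak{z}(\ell)=\bigl(I_{\Phi(\mathfrak{m})^+}^{\mu}\widetilde{\mathfrak z}\bigr)(\Phi(\ell))$ and, analogously, that the $\Phi$-Caputo derivative is the classical Caputo derivative of $\widetilde{\mathfrak z}$ evaluated at $\Phi(\ell)$. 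Under this conjugation, (1) and (2) reduce to the familiar classical fundamental-theorem identities, so I would simply record them with the citation \cite{Almeida1,AlmeidaM}.

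Next I would prove the power rule (3) by direct computation. Substituting $\mathfrak{z}(\eta)=(\Phi(\eta)-\Phi(\mathfrak{m}))^{\kappa-1}$ into the integral definition of $\mathbb{I}_{\mathfrak{m}^+}^{\mu;\Phi}$ and performing the change of variable $s=\Phi(\eta)$, $\mathrm{d}s=\Phi'(\eta)\,\mathrm{d}\eta$, turns the integral into a Beta-function integral that evaluates to $\tfrac{\Gamma(\kappa)}{\Gamma(\kappa+\mu)}(\Phi(\ell)-\Phi(\mathfrak{m}))^{\kappa+\mu-1}$ (integrability near $\eta=\mathfrak{m}$ is guaranteed by $\kappa>0$). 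Identity (4) then follows by applying the definition ${^{c}\mathbb{D}}_{\mathfrak{m}^+}^{\mu;\Phi}=\mathbb{I}_{\mathfrak{m}^+}^{1-\mu;\Phi}\mathfrak{z}_\Phi^{[1]}$ (here $0<\mu\le 1$, so $n=1$): computing $\mathfrak{z}_\Phi^{[1]}(\Phi(\ell)-\Phi(\mathfrak{m}))^{\kappa-1}=(\kappa-1)(\Phi(\ell)-\Phi(\mathfrak{m}))^{\kappa-2}$, feeding this into (3) with order $1-\mu$, and using $(\kappa-1)\Gamma(\kappa-1)=\Gamma(\kappa)$ yields the claimed derivative.

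The heart of the lemma is (5). The plan is to expand the Mittag--Leffler function through its defining series \eqref{mgf} and integrate termwise,
\[
\mathbb{I}_{\mathfrak{m}^+}^{\mu;\Phi}\,\mathbb{M}_{\mu}\bigl(\beta(\Phi(\ell)-\Phi(\mathfrak{m}))^{\mu}\bigr)
=\sum_{k=0}^{\infty}\frac{\beta^{k}}{\Gamma(\mu k+1)}\,\mathbb{I}_{\mathfrak{m}^+}^{\mu;\Phi}(\Phi(\ell)-\Phi(\mathfrak{m}))^{\mu k}.
\]
Applying (3) with $\kappa=\mu k+1$ collapses each term to $\tfrac{1}{\Gamma(\mu(k+1)+1)}(\Phi(\ell)-\Phi(\mathfrak{m}))^{\mu(k+1)}$, and after the reindexing $j=k+1$ this reconstructs the series for $\mathbb{M}_{\mu}$ with the $j=0$ term removed, giving $\tfrac{1}{\beta}\bigl(\mathbb{M}_{\mu}(\beta(\Phi(\ell)-\Phi(\mathfrak{m}))^{\mu})-1\bigr)$. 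The main obstacle here is justifying the termwise integration: one must verify that the series converges uniformly on the compact set $\{\Phi(\ell)-\Phi(\mathfrak{m}):\ell\in\Omega\}$ so that the interchange of $\sum$ and $\mathbb{I}_{\mathfrak{m}^+}^{\mu;\Phi}$ is legitimate, which is where the entireness of $\mathbb{M}_{\mu}$ and the boundedness of $\Phi(\ell)-\Phi(\mathfrak{m})$ on $\Omega$ do the work. Finally, (6) is immediate from (5) with $\beta=1$, since subtracting the nonnegative constant $1$ only decreases the value, yielding the stated estimate.
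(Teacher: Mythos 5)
Your proposal is correct in substance, but note that the paper itself offers no proof of this lemma at all: it is quoted verbatim from the cited references \cite{Almeida1,AlmeidaM} as part of the preliminaries, so there is nothing in the text to compare your argument against line by line. What you have written is a legitimate self-contained derivation: the conjugation $\widetilde{\mathfrak z}=\mathfrak z\circ\Phi^{-1}$ reducing items (1)--(2) to the classical composition rules, the Beta-integral computation for (3), and especially the termwise-integration argument for (5) (reindex $j=k+1$ to recover the Mittag--Leffler series minus its constant term) are exactly how these identities are established in the cited sources, and your appeal to the entireness of $\mathbb M_\mu$ and uniform convergence on the compact image of $\Omega$ under $\Phi$ is the right justification for interchanging $\sum$ and $\mathbb I_{\mathfrak m^+}^{\mu;\Phi}$. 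Item (6) does follow trivially from (5) with $\beta=1$ since the subtracted constant is nonnegative.

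One caveat worth recording: your derivation of item (4) feeds $\mathfrak z_\Phi^{[1]}(\Phi(\ell)-\Phi(\mathfrak m))^{\kappa-1}=(\kappa-1)(\Phi(\ell)-\Phi(\mathfrak m))^{\kappa-2}$ into the power rule (3) with order $1-\mu$, which requires the exponent $\kappa-2$ to exceed $-1$, i.e.\ $\kappa>1$, for the Beta integral to converge. For $0<\kappa\le 1$ the identity as stated actually fails for the Caputo-type derivative (e.g.\ $\kappa=1$ gives the derivative of a constant, which is $0$, not $\tfrac{1}{\Gamma(1-\mu)}(\Phi(\ell)-\Phi(\mathfrak m))^{-\mu}$); the sources impose $\kappa>n$ for precisely this reason. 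This is a defect of the lemma as transcribed in the paper rather than of your argument, but your proof should state the restriction $\kappa>1$ explicitly where it is used. Since item (4) plays no role in the paper's subsequent arguments, this does not propagate anywhere.
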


\begin{definition} [\cite{JaradLaplace}] A function $\mathfrak z : [\mathfrak {m}, \infty) \to\mathbb{R}$ is said to be of $\Phi(\ell)$-exponential order if there exist non-negative constants $c_1, c_2, \mathfrak {n}$ such that 
\[
|\mathfrak z(\ell)| \leq c_1e^{c_2(\Phi(\ell)-\Phi (\mathfrak {m}))}, \quad \ell \geq \mathfrak {n}.
\]
\end{definition}
\begin{definition}[\cite{JaradLaplace}]
Let $\mathfrak {z}, \Phi:[\mathfrak {m}, \infty) \rightarrow\mathbb{R}$ be real valued functions such that $\Phi(\ell)$ is continuous and $\Phi'(\ell) > 0$ on $[\mathfrak {m}, \infty)$. The generalized Laplace transform of $\mathfrak {z}$ is denoted by
\begin{align}\label{GLT}
\mathbb{L}_{\Phi} \bigl\lbrace {\mathfrak {z}(\ell)} \bigr\rbrace= \int_{\mathfrak {m}}^{ \infty} e^{-\lambda (\Phi(\ell)-\Phi (\mathfrak {m}) ) } \mathfrak {m}(\ell)\Phi '(\ell) \,{\mathrm d}\ell, \quad \mbox{for all}\; \lambda>0,
\end{align}
provided that the integral in \eqref{GLT} exists.
\end{definition}
\begin{definition}[\cite{JaradLaplace}]
Let $\mathfrak z_{1}$ and $\mathfrak z_{2}$ be two functions which are piecewise continuous at each interval $[\mathfrak {m}, \mathfrak {n}]$ and of exponential order. We define the generalized convolution of $\mathfrak z_{1}$ and $\mathfrak z_{2}$ by
\[
( \mathfrak z_{1} \ast_{\Phi}\mathfrak z_{2}) ( \ell ) = \int_{\mathfrak {m}}^{\ell}\Phi '(\eta) \mathfrak z_{1}( \eta) \mathfrak z_{2} \bigl( \Phi ^{-1} \bigl(\Phi(\ell)+\Phi (\mathfrak {m})- \Phi(\eta) \bigr) \bigr) \,d\eta.
\]
\end{definition}
\begin{lemma}[\cite{JaradLaplace}]
Let $\mathfrak z_{1}$ and $\mathfrak z_{2}$ be two functions which are piecewise continuous at each interval $[\mathfrak {m}, \mathfrak {n}]$ and of exponential order. Then
\[
\mathbb{L}_{\Phi} \bigl\lbrace {\mathfrak z_{1} \ast_{\Phi}\mathfrak z_{2}} \bigr\rbrace
=\mathbb{L}_{\Phi} \bigl\lbrace {\mathfrak z_{1} } \bigr\rbrace\mathbb{L}_{\Phi} \bigl\lbrace {\mathfrak z_{2}} \bigr\rbrace.
\]
\end{lemma}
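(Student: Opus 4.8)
The plan is to follow the classical route for convolution theorems, transposed to the $\Phi$-calculus: write the transform of the convolution as an iterated integral, swap the order of integration, and then apply a change of variable that splits the exponential kernel into a product. First I would insert the definition of $\ast_{\Phi}$ into that of $\mathbb{L}_{\Phi}$ to obtain
\[
\mathbb{L}_{\Phi}\bigl\lbrace \mathfrak z_{1} \ast_{\Phi}\mathfrak z_{2}\bigr\rbrace = \int_{\mathfrak {m}}^{\infty} e^{-\lambda(\Phi(\ell)-\Phi(\mathfrak {m}))}\Phi'(\ell)\left(\int_{\mathfrak {m}}^{\ell}\Phi'(\eta)\mathfrak z_{1}(\eta)\mathfrak z_{2}\bigl(\Phi^{-1}(\Phi(\ell)+\Phi(\mathfrak {m})-\Phi(\eta))\bigr)\,d\eta\right)d\ell,
\]
which is an iterated integral over the triangular region $\{(\eta,\ell):\mathfrak {m}\le\eta\le\ell<\infty\}$.

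The crucial and most delicate step is the interchange of the order of integration, and this is where the standing hypotheses enter. Since $\mathfrak z_{1}$ and $\mathfrak z_{2}$ are piecewise continuous on every compact subinterval and of $\Phi(\ell)$-exponential order, the double integrand is absolutely integrable once $\lambda$ exceeds the sum of the two growth constants $c_2$ appearing in the exponential-order estimates. Tonelli's theorem then secures absolute convergence and Fubini's theorem legitimizes the swap, yielding
\[
\mathbb{L}_{\Phi}\bigl\lbrace \mathfrak z_{1} \ast_{\Phi}\mathfrak z_{2}\bigr\rbrace = \int_{\mathfrak {m}}^{\infty}\Phi'(\eta)\mathfrak z_{1}(\eta)\left(\int_{\eta}^{\infty} e^{-\lambda(\Phi(\ell)-\Phi(\mathfrak {m}))}\Phi'(\ell)\mathfrak z_{2}\bigl(\Phi^{-1}(\Phi(\ell)+\Phi(\mathfrak {m})-\Phi(\eta))\bigr)\,d\ell\right)d\eta.
\]

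In the inner integral, with $\eta$ held fixed, I would substitute $u=\Phi^{-1}(\Phi(\ell)+\Phi(\mathfrak {m})-\Phi(\eta))$, so that $\Phi(u)=\Phi(\ell)+\Phi(\mathfrak {m})-\Phi(\eta)$ and hence $\Phi'(u)\,du=\Phi'(\ell)\,d\ell$; the lower limit $\ell=\eta$ becomes $u=\mathfrak {m}$ and $\ell\to\infty$ becomes $u\to\infty$ since $\Phi$ is increasing. The algebraic identity
\[
\Phi(\ell)-\Phi(\mathfrak {m})=\bigl(\Phi(u)-\Phi(\mathfrak {m})\bigr)+\bigl(\Phi(\eta)-\Phi(\mathfrak {m})\bigr)
\]
then factorizes the exponential kernel, so the inner integral equals $e^{-\lambda(\Phi(\eta)-\Phi(\mathfrak {m}))}\,\mathbb{L}_{\Phi}\bigl\lbrace\mathfrak z_{2}\bigr\rbrace$, a quantity that is constant in $\eta$. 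Pulling this constant out of the outer integral leaves precisely $\mathbb{L}_{\Phi}\bigl\lbrace\mathfrak z_{1}\bigr\rbrace$, and the product formula follows. I expect the only genuine obstacle to be the rigorous justification of the Fubini interchange; the change of variables is then routine, once one observes the shift identity for $\Phi(\ell)-\Phi(\mathfrak {m})$, which is exactly the structural feature that makes the chosen definition of $\ast_{\Phi}$ the correct one for this transform.
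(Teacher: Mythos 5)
The paper states this lemma as a direct citation of Jarad--Abdeljawad and supplies no proof of its own, so there is nothing internal to compare against; your argument is correct and is the standard one (Fubini over the triangular region $\mathfrak{m}\le\eta\le\ell<\infty$, then the substitution $u=\Phi^{-1}\bigl(\Phi(\ell)+\Phi(\mathfrak{m})-\Phi(\eta)\bigr)$, which splits the exponential kernel via $\Phi(\ell)-\Phi(\mathfrak{m})=(\Phi(u)-\Phi(\mathfrak{m}))+(\Phi(\eta)-\Phi(\mathfrak{m}))$), matching the proof in the cited source. The only nitpick is that absolute convergence already holds for $\lambda>\max(c_2,c_2')$ rather than for $\lambda$ exceeding the sum of the growth constants, but your stronger sufficient condition is harmless.
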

In the following lemma, we present the generalized Laplace transforms of some elementary functions
as well as the generalized Laplace transforms of 
the generalized fractional integrals and derivatives.
\begin{lemma}[\cite{JaradLaplace}]
\label{LaplacePsi} The
following properties are satisfied:
\begin{enumerate}
\item  $\mathbb{L}_{\Phi} \bigl\lbrace {1} \bigr\rbrace=\frac{1}{\lambda}, $ \quad  $\lambda>0,$
\item $\mathbb{L}_{\Phi} \bigl\lbrace {(\Phi (\ell)-\Phi (\mathfrak {m}))^{\mathrm r-1}} \bigr\rbrace =\frac{\Gamma (\mathrm r)}{\lambda^{\mathrm r}},$ \quad  $\mathrm r,\lambda>0,$
\item $\mathbb{L}_{\Phi} \bigl\lbrace {\mathbb{M}_{\mathrm p}\bigl(\pm\varrho(\Phi(\ell)-\Phi (\mathfrak {m}))
^{\mathrm p}\bigr)} \bigr\rbrace = \frac{\lambda^{\mathrm p-1}}{\lambda^{\mathrm p}\mp\varrho},$ \quad  $\mathrm p>0$ and $\left\vert\frac{\varrho}{\lambda^{\mathrm p}}\right\vert<1$,
\item $\mathbb{L}_{\Phi} \bigl\lbrace {(\Phi (\ell)-\Phi (\mathfrak {m}))^{\mathrm q -1}\mathbb M_{\mathrm p,\mathrm q}\bigl(\pm\varrho(\Phi(\ell)-\Phi (\mathfrak {m}))^{\mathrm p}\bigr)} \bigr\rbrace = \frac{\lambda^{\mathrm p-\mathrm q}}{\lambda^{\mathrm p}\mp\varrho},$ \quad  $\mathrm p>0$ and $\left\vert\frac{\varrho}{\lambda^{\mathrm p}}\right\vert<1$,\item $\mathbb{L}_{\Phi} \bigl\lbrace {\mathbb{I}_{\mathfrak {m}^{+}}^{\mu ;\Phi }\mathfrak {z} (\ell )} \bigr\rbrace = \frac{\mathbb{L}_{\Phi} \bigl\lbrace {\mathfrak {z} (\ell )} \bigr\rbrace }{\lambda^{\mu}},$ \quad  $\mu,\lambda>0$,
\item $\mathbb{L}_{\Phi} \bigl\lbrace {{^{c}\mathbb{D}}_{\mathfrak {m}^{+}}^{\mu ;\Phi }\mathfrak {z} (\ell )} \bigr\rbrace = \lambda^{\mu}\mathbb{L}_{\Phi} \bigl\lbrace {\mathfrak {z} (\ell )} \bigr\rbrace-\lambda^{\mu-1}\mathfrak {z} (\mathfrak {m}),$ \quad  $0<\mu\leq1$ and $\lambda>0$,
\end{enumerate}
\end{lemma}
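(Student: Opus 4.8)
The plan is to reduce every identity to the classical Laplace transform via the single substitution $u = \Phi(\ell) - \Phi(\mathfrak{m})$, which (since $\Phi$ is increasing with $\Phi'(\ell) > 0$) gives $\mathrm{d}u = \Phi'(\ell)\,\mathrm{d}\ell$ and carries the defining integral \eqref{GLT} into $\int_0^\infty e^{-\lambda u}\widetilde{\mathfrak{z}}(u)\,\mathrm{d}u$, where $\widetilde{\mathfrak{z}}(u) = \mathfrak{z}(\Phi^{-1}(u + \Phi(\mathfrak{m})))$. Under this substitution $\mathbb{L}_{\Phi}$ becomes the ordinary Laplace transform evaluated at $\lambda$, so items (1) and (2) follow at once from the elementary transforms $\int_0^\infty e^{-\lambda u}\,\mathrm{d}u = 1/\lambda$ and $\int_0^\infty e^{-\lambda u}u^{\mathrm r - 1}\,\mathrm{d}u = \Gamma(\mathrm r)/\lambda^{\mathrm r}$.

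For items (3) and (4) I would expand the M--LFs using \eqref{mgf} and integrate term by term. Writing $\mathbb{M}_{\mathrm p}(\pm\varrho u^{\mathrm p}) = \sum_{k \ge 0}\frac{(\pm\varrho)^k u^{\mathrm p k}}{\Gamma(\mathrm p k + 1)}$ and applying item (2) to each power $u^{\mathrm p k}$ produces the geometric series $\lambda^{-1}\sum_{k \ge 0}(\pm\varrho/\lambda^{\mathrm p})^k$, which sums to $\lambda^{\mathrm p - 1}/(\lambda^{\mathrm p} \mp \varrho)$ precisely when $|\varrho/\lambda^{\mathrm p}| < 1$; the same computation with the two-parameter series and the prefactor $u^{\mathrm q - 1}$ yields item (4). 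The one subtle point here is the legitimacy of interchanging the infinite sum with the integral, which I would justify by dominated convergence, using the stated condition $|\varrho/\lambda^{\mathrm p}| < 1$ to supply an integrable majorant.

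Item (5) is where the convolution structure does the work. I would rewrite the R--L fractional integral from Definition \ref{r} as a generalized convolution by setting $\mathfrak{z}_2(s) = (\Phi(s) - \Phi(\mathfrak{m}))^{\mu - 1}/\Gamma(\mu)$; then $\mathfrak{z}_2\bigl(\Phi^{-1}(\Phi(\ell) + \Phi(\mathfrak{m}) - \Phi(\eta))\bigr) = (\Phi(\ell) - \Phi(\eta))^{\mu - 1}/\Gamma(\mu)$, so that $\mathbb{I}_{\mathfrak{m}^+}^{\mu;\Phi}\mathfrak{z} = \mathfrak{z} \ast_\Phi \mathfrak{z}_2$. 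The preceding convolution lemma then gives $\mathbb{L}_\Phi\{\mathbb{I}_{\mathfrak{m}^+}^{\mu;\Phi}\mathfrak{z}\} = \mathbb{L}_\Phi\{\mathfrak{z}\}\,\mathbb{L}_\Phi\{\mathfrak{z}_2\}$, and item (2) evaluates $\mathbb{L}_\Phi\{\mathfrak{z}_2\} = \lambda^{-\mu}$, which is the claim.

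Finally, for item (6) with $0 < \mu \le 1$ I would invoke the definition ${}^{c}\mathbb{D}_{\mathfrak{m}^+}^{\mu;\Phi}\mathfrak{z} = \mathbb{I}_{\mathfrak{m}^+}^{1-\mu;\Phi}\mathfrak{z}_{\Phi}^{[1]}$ together with item (5), reducing the task to computing $\mathbb{L}_\Phi\{\mathfrak{z}_{\Phi}^{[1]}\}$. Since $\mathfrak{z}_{\Phi}^{[1]}(\ell)\Phi'(\ell) = \mathfrak{z}'(\ell)$, integration by parts in \eqref{GLT} gives $\mathbb{L}_\Phi\{\mathfrak{z}_{\Phi}^{[1]}\} = \lambda\mathbb{L}_\Phi\{\mathfrak{z}\} - \mathfrak{z}(\mathfrak{m})$, where the boundary term at infinity vanishes by the $\Phi(\ell)$-exponential order assumption; dividing by $\lambda^{1-\mu}$ per item (5) produces $\lambda^{\mu}\mathbb{L}_\Phi\{\mathfrak{z}\} - \lambda^{\mu - 1}\mathfrak{z}(\mathfrak{m})$. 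The main obstacle throughout is analytic rather than algebraic: securing convergence of the defining integrals and of the term-by-term summation, and controlling the boundary term at infinity, all of which rest on the exponential-order hypothesis and the condition $|\varrho/\lambda^{\mathrm p}| < 1$.
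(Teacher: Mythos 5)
The paper offers no proof of Lemma \ref{LaplacePsi}: it is quoted verbatim from the cited reference of Jarad and Abdeljawad, so there is nothing internal to compare your argument against. Judged on its own, your proof is correct and is essentially the standard derivation (and the one used in that reference): the substitution $u=\Phi(\ell)-\Phi(\mathfrak{m})$ reduces items (1)--(2) to the classical transforms, term-by-term integration of the series \eqref{mgf} under the condition $\left\vert\varrho/\lambda^{\mathrm p}\right\vert<1$ gives (3)--(4), recognizing $\mathbb{I}_{\mathfrak{m}^{+}}^{\mu;\Phi}\mathfrak{z}=\mathfrak{z}\ast_{\Phi}\bigl((\Phi(\cdot)-\Phi(\mathfrak{m}))^{\mu-1}/\Gamma(\mu)\bigr)$ and invoking the convolution lemma gives (5), and integration by parts plus (5) gives (6). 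Two minor points worth making explicit: in (6) the vanishing of the boundary term requires $\lambda>c_2$, the constant in the $\Phi(\ell)$-exponential order bound, not merely $\lambda>0$ as stated in the lemma (this imprecision is inherited from the statement itself); and for the interchange of sum and integral in (3)--(4) the cleanest justification is Tonelli applied to the absolute series, which converges for $\lambda^{\mathrm p}>\varrho$, rather than dominated convergence with an ad hoc majorant.
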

The following lemma is a generalization of Gronwall's inequality.
\begin{lemma}[\cite{Gronwall}] \label{Gronwall1}
Let $\Omega$ be the domain of the nonnegative integrable functions $\mathfrak c_{1}, \mathfrak c_{2}$. Also, $\mathfrak c_{3}$ be a continuous, nonnegative and nondecreasing function defined
on $\Omega$ and $\Phi\in  C^1(\Omega, \mathbb R_+)$ be an increasing function with the restriction
that $\Phi^{\prime}(\ell) \neq 0, \forall \ell \in \Omega$.
If
\[
\mathfrak c_{1}(\ell) \leq \mathfrak c_{2}(\ell) + \mathfrak c_{3}(\ell)\int_{\mathfrak {m}}^{\ell}\Phi
^{\prime }(\eta)(\Phi (\ell)-\Phi (\eta))^{\mu -1}\mathfrak c_{1}(\eta)\mathrm{d}\eta,\; \ell \in  \Omega.
\]
Then
\[
\mathfrak c_{1}(\ell) \leq  \mathfrak c_{2}(\ell) + \int_{\mathfrak {m}}^{\ell}\sum_{n=0}^{\infty}\frac{(\mathfrak c_{3}(\ell)\Gamma(\mu))^{n}}{\Gamma(n\mu)}\Phi
^{\prime }(\eta)(\Phi (\ell)-\Phi (\eta))^{n\mu -1}\mathfrak c_{2}(\eta)\mathrm{d}\eta,\; \ell \in \Omega.
\]
\end{lemma}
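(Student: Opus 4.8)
The plan is to establish this by the classical Picard-iteration (successive-substitution) argument, adapted to the $\Phi$-fractional kernel. First I would recast the hypothesis in operator form by introducing the linear operator
\[
B\phi(\ell) := \mathfrak{c}_{3}(\ell)\int_{\mathfrak{m}}^{\ell}\Phi'(\eta)\bigl(\Phi(\ell)-\Phi(\eta)\bigr)^{\mu-1}\phi(\eta)\,\mathrm{d}\eta
= \mathfrak{c}_{3}(\ell)\,\Gamma(\mu)\,\mathbb{I}_{\mathfrak{m}^{+}}^{\mu;\Phi}\phi(\ell),
\]
so that the assumed inequality reads $\mathfrak{c}_{1}\leq \mathfrak{c}_{2}+B\mathfrak{c}_{1}$. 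Since $\mathfrak{c}_{3}\geq 0$ and the kernel $\Phi'(\eta)(\Phi(\ell)-\Phi(\eta))^{\mu-1}$ is nonnegative on $\Omega$, the operator $B$ is monotone, i.e.\ $\phi\leq\psi$ implies $B\phi\leq B\psi$. Substituting the hypothesis into itself and using monotonicity repeatedly then gives, for every integer $N\geq 1$,
\[
\mathfrak{c}_{1}(\ell)\leq \sum_{k=0}^{N-1}B^{k}\mathfrak{c}_{2}(\ell)+B^{N}\mathfrak{c}_{1}(\ell),\qquad \ell\in\Omega.
\]

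The core of the argument is the closed-form bound for the iterates, which I would prove by induction on $n$: for every nonnegative $\phi$,
\[
B^{n}\phi(\ell)\leq \bigl(\mathfrak{c}_{3}(\ell)\bigr)^{n}\Gamma(\mu)^{n}\,\mathbb{I}_{\mathfrak{m}^{+}}^{n\mu;\Phi}\phi(\ell)
=\int_{\mathfrak{m}}^{\ell}\frac{\bigl(\mathfrak{c}_{3}(\ell)\Gamma(\mu)\bigr)^{n}}{\Gamma(n\mu)}\Phi'(\eta)\bigl(\Phi(\ell)-\Phi(\eta)\bigr)^{n\mu-1}\phi(\eta)\,\mathrm{d}\eta.
\]
In the inductive step I would write $B^{n}\phi=\mathfrak{c}_{3}\Gamma(\mu)\mathbb{I}_{\mathfrak{m}^{+}}^{\mu;\Phi}(B^{n-1}\phi)$, insert the inductive bound for $B^{n-1}\phi$, and use that $\mathfrak{c}_{3}$ is nondecreasing to replace the inner factor $\mathfrak{c}_{3}(\eta)^{n-1}$ by $\mathfrak{c}_{3}(\ell)^{n-1}$ for $\eta\leq\ell$, pulling it outside the integral. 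The two remaining fractional integrals combine through the semigroup identity $\mathbb{I}_{\mathfrak{m}^{+}}^{\mu;\Phi}\mathbb{I}_{\mathfrak{m}^{+}}^{(n-1)\mu;\Phi}=\mathbb{I}_{\mathfrak{m}^{+}}^{n\mu;\Phi}$; this identity is obtained by interchanging the order of integration (Fubini) and evaluating the inner integral as a Beta integral, namely $\int_{\tau}^{\ell}\Phi'(\eta)(\Phi(\ell)-\Phi(\eta))^{\mu-1}(\Phi(\eta)-\Phi(\tau))^{(n-1)\mu-1}\,\mathrm{d}\eta=(\Phi(\ell)-\Phi(\tau))^{n\mu-1}\Gamma(\mu)\Gamma((n-1)\mu)/\Gamma(n\mu)$ after the substitution that normalizes $\Phi(\eta)$, which is the content of Lemma~\ref{LMA2}(3).

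Applying this bound with $\phi=\mathfrak{c}_{2}$ identifies $\sum_{k=0}^{N-1}B^{k}\mathfrak{c}_{2}(\ell)$ with the partial sum of the series in the conclusion: the term $k=0$ is $\mathfrak{c}_{2}(\ell)$, displayed separately, while the formal $n=0$ term of the stated series vanishes because $1/\Gamma(0)=0$. It then remains to show that the remainder satisfies $B^{N}\mathfrak{c}_{1}(\ell)\to 0$ as $N\to\infty$, which I expect to be the main obstacle. To this end I would bound $\mathfrak{c}_{1}$ by a constant $K$ on the compact interval up to $\ell$ and use Lemma~\ref{LMA2}(3) with $\kappa=1$ to get $\mathbb{I}_{\mathfrak{m}^{+}}^{N\mu;\Phi}\mathfrak{c}_{1}(\ell)\leq K(\Phi(\ell)-\Phi(\mathfrak{m}))^{N\mu}/\Gamma(N\mu+1)$, whence
\[
B^{N}\mathfrak{c}_{1}(\ell)\leq K\,\frac{\bigl[\mathfrak{c}_{3}(\ell)\Gamma(\mu)(\Phi(\ell)-\Phi(\mathfrak{m}))^{\mu}\bigr]^{N}}{\Gamma(N\mu+1)}.
\]
This is $K$ times the $N$-th term of the convergent Mittag-Leffler series $\mathbb{M}_{\mu}\bigl(\mathfrak{c}_{3}(\ell)\Gamma(\mu)(\Phi(\ell)-\Phi(\mathfrak{m}))^{\mu}\bigr)$, so it tends to zero. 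Letting $N\to\infty$ in the iterated inequality yields the asserted bound. The only technical points to verify carefully are the legitimacy of the Fubini interchange, guaranteed by nonnegativity of the integrands, and the uniform boundedness of $\mathfrak{c}_{1}$ needed to control the remainder.
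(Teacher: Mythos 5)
The paper does not prove this lemma at all; it imports it verbatim from the cited reference \cite{Gronwall}, where it is established by exactly the successive-substitution argument you give (iterate the inequality, collapse $B^{n}$ to a single kernel of order $n\mu$ via Fubini and the Beta integral, and send the remainder to zero through the Mittag--Leffler tail), so your route coincides with the source's. The one point to repair is the remainder step: the lemma assumes $\mathfrak c_{1}$ only nonnegative and integrable, not bounded, so you should not invoke a uniform bound $K$; instead, once $N\mu\geq 1$ the kernel satisfies $(\Phi(\ell)-\Phi(\eta))^{N\mu-1}\leq \max\bigl\{1,(\Phi(\ell)-\Phi(\mathfrak m))^{N\mu-1}\bigr\}$, giving $B^{N}\mathfrak c_{1}(\ell)\leq \frac{(\mathfrak c_{3}(\ell)\Gamma(\mu))^{N}}{\Gamma(N\mu)}\max\bigl\{1,(\Phi(\ell)-\Phi(\mathfrak m))^{N\mu-1}\bigr\}\int_{\mathfrak m}^{\ell}\Phi'(\eta)\mathfrak c_{1}(\eta)\,\mathrm{d}\eta\to 0$ by Stirling, which closes the argument under the stated hypotheses.
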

\begin{corollary}[\cite{Gronwall}] \label{Gronwall2}
Under the conditions of Lemma \ref{Gronwall1}, let $\mathfrak c_{2}$ be a nondecreasing function on $\Omega$. Then we get that
\begin{align}\label{GI2}
\mathfrak c_{1}(\ell) \leq  \mathfrak c_{2}(\ell)\mathbb M_{\mu}\left(\Gamma(\mu)\mathfrak c_{3}(\ell)\bigl(\Phi (\ell)-\Phi (\mathfrak {m})\bigr)^{\mu}\right),\; \ell \in \Omega.
\end{align}
\end{corollary}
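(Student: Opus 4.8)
The plan is to substitute the explicit series bound furnished by Lemma \ref{Gronwall1} into the monotonicity hypothesis on $\mathfrak{c}_2$ and then recognize the resulting power series as a Mittag--Leffler function. First I would start from the conclusion of Lemma \ref{Gronwall1}, namely
\[
\mathfrak{c}_1(\ell) \le \mathfrak{c}_2(\ell) + \int_{\mathfrak{m}}^{\ell}\sum_{n=0}^{\infty}\frac{\bigl(\mathfrak{c}_3(\ell)\Gamma(\mu)\bigr)^{n}}{\Gamma(n\mu)}\,\Phi'(\eta)\bigl(\Phi(\ell)-\Phi(\eta)\bigr)^{n\mu-1}\mathfrak{c}_2(\eta)\,\mathrm{d}\eta .
\]
Since $\mathfrak{c}_2$ is nondecreasing and $\eta\le\ell$ throughout the range of integration, I would bound $\mathfrak{c}_2(\eta)\le\mathfrak{c}_2(\ell)$ and factor the constant $\mathfrak{c}_2(\ell)$ (along with the $\eta$-independent quantity $\mathfrak{c}_3(\ell)$) out of the integral.

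Next, because every summand is nonnegative, I would interchange the summation with the integration (justified by Tonelli's theorem / monotone convergence), reducing the problem to the single elementary integral $\int_{\mathfrak{m}}^{\ell}\Phi'(\eta)\bigl(\Phi(\ell)-\Phi(\eta)\bigr)^{n\mu-1}\,\mathrm{d}\eta$. The substitution $u=\Phi(\ell)-\Phi(\eta)$ (with $\mathrm{d}u=-\Phi'(\eta)\,\mathrm{d}\eta$) evaluates this in closed form to $\dfrac{\bigl(\Phi(\ell)-\Phi(\mathfrak{m})\bigr)^{n\mu}}{n\mu}$.

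Then I would invoke the functional equation $\Gamma(n\mu+1)=n\mu\,\Gamma(n\mu)$ to absorb the factor $n\mu$ appearing in the denominator, so that the general term of the surviving series becomes $\dfrac{\bigl(\Gamma(\mu)\,\mathfrak{c}_3(\ell)\,(\Phi(\ell)-\Phi(\mathfrak{m}))^{\mu}\bigr)^{n}}{\Gamma(n\mu+1)}$. Comparing this with the series definition \eqref{mgf} of $\mathbb{M}_{\mu}$ with argument $\vartheta=\Gamma(\mu)\,\mathfrak{c}_3(\ell)\,(\Phi(\ell)-\Phi(\mathfrak{m}))^{\mu}$, the entire right-hand side collapses to $\mathfrak{c}_2(\ell)\,\mathbb{M}_{\mu}(\vartheta)$, which is precisely \eqref{GI2}.

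I do not anticipate a substantial analytic obstacle; the only point requiring care is the bookkeeping around the index $n=0$. The $n=0$ term of the sum in Lemma \ref{Gronwall1} carries the factor $1/\Gamma(0)=0$ (together with a formally singular power $(\Phi(\ell)-\Phi(\eta))^{-1}$), so it contributes nothing and the effective summation runs over $n\ge1$; after the computation above the $\mathfrak{c}_2(\ell)$ term outside the integral supplies exactly the missing leading term $1=1/\Gamma(1)$, restoring the full series $\sum_{n\ge0}$. Thus the genuine work is confined to the monotonicity replacement, the justification of term-by-term integration via nonnegativity, and the single $\Gamma$-identity, all of which are routine.
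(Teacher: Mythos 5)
Your argument is correct: pulling $\mathfrak c_{2}(\eta)\le\mathfrak c_{2}(\ell)$ out by monotonicity, swapping sum and integral by nonnegativity, evaluating $\int_{\mathfrak m}^{\ell}\Phi'(\eta)(\Phi(\ell)-\Phi(\eta))^{n\mu-1}\,\mathrm{d}\eta=(\Phi(\ell)-\Phi(\mathfrak m))^{n\mu}/(n\mu)$, and absorbing $n\mu$ via $\Gamma(n\mu+1)=n\mu\,\Gamma(n\mu)$ reassembles exactly the series for $\mathbb M_{\mu}$, with the $n=0$ bookkeeping handled as you describe. Note that the paper itself gives no proof of this corollary --- it is imported verbatim from the cited reference \cite{Gronwall} --- and your derivation is the standard one used there, so there is nothing to contrast.
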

\begin{definition}[\cite{Rus}] Let $(\mathbb E, \mathrm d)$ be a metric space. An operator $\mathbb S: \mathbb E\to \mathbb E$ is a Picard operator (PO) if there exists $ \mathfrak b^{\ast}\in \mathbb E$ such that
\begin{enumerate}
\item $\mathbb {F}_{\mathbb S}=\{\mathfrak b^{\ast}\}$ where $\mathbb {F}_{\mathbb S}= \{\mathfrak b \in \mathbb E:  \mathbb S(\mathfrak b) = \mathfrak b\}$ is the fixed point set of $\mathbb S$;
\item the sequence $\{\mathbb S^{j}(\mathfrak b_{0})\}_{j\in\mathbb N}$ converges to $\mathfrak b^{\ast}$ for all $\mathfrak b_{0}\in \mathbb E$.
\end{enumerate}
\end{definition}
\begin{lemma}[\cite{Rus2} abstract Gronwall lemma] \label{AGL}
Let $(\mathbb E, \mathrm d, \leq)$ be an ordered metric space and $\mathbb S: \mathbb E\to \mathbb E$ be an increasing PO. Then, for $\mathfrak b \in \mathbb E, \mathfrak b  \leq \mathbb S\mathfrak b$ implies $\mathfrak b \leq \mathfrak b^{\ast}$.
\end{lemma}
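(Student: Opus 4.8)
The plan is to exploit the monotonicity of $\mathbb S$ to build an increasing chain of iterates based at $\mathfrak b$, and then to pass to the limit using the convergence supplied by the Picard-operator hypothesis. Throughout, the only ingredients are the definition of an increasing PO together with the compatibility of the order $\leq$ with the metric $\mathrm d$ on $\mathbb E$.

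First I would show, by induction on $n$, that $\mathbb S^{n}\mathfrak b \leq \mathbb S^{n+1}\mathfrak b$ for every $n \in \mathbb N$. The base case $n=0$ is exactly the standing assumption $\mathfrak b \leq \mathbb S\mathfrak b$. For the inductive step, applying the increasing operator $\mathbb S$ to the inequality $\mathbb S^{n}\mathfrak b \leq \mathbb S^{n+1}\mathfrak b$ preserves the order and yields $\mathbb S^{n+1}\mathfrak b \leq \mathbb S^{n+2}\mathfrak b$. Chaining these by transitivity of $\leq$ gives $\mathfrak b \leq \mathbb S\mathfrak b \leq \mathbb S^{2}\mathfrak b \leq \cdots \leq \mathbb S^{n}\mathfrak b$, so in particular $\mathfrak b \leq \mathbb S^{n}\mathfrak b$ holds for all $n$. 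Next, since $\mathbb S$ is a Picard operator, condition (2) of its definition guarantees that the sequence $\{\mathbb S^{j}(\mathfrak b)\}_{j\in\mathbb N}$ converges in $(\mathbb E, \mathrm d)$ to the unique fixed point $\mathfrak b^{\ast}$, where $\mathbb F_{\mathbb S}=\{\mathfrak b^{\ast}\}$.

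Finally I would let $n \to \infty$ in the inequality $\mathfrak b \leq \mathbb S^{n}\mathfrak b$. This limiting step is where the real content lies, and it is the step I expect to be the main (indeed the only) subtlety: it requires that the order $\leq$ be closed with respect to the metric topology on $\mathbb E$, that is, that $c_{n} \to c$ together with $\mathfrak b \leq c_{n}$ for all $n$ forces $\mathfrak b \leq c$. This compatibility of the partial order with the metric is part of the standing notion of an ordered metric space in the sense of Rus, so it may be invoked directly rather than re-derived. Granting it and using $\mathbb S^{n}\mathfrak b \to \mathfrak b^{\ast}$, the passage to the limit in $\mathfrak b \leq \mathbb S^{n}\mathfrak b$ delivers $\mathfrak b \leq \mathfrak b^{\ast}$, which is precisely the desired conclusion.
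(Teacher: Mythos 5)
Your argument is correct and is the standard proof of Rus's abstract Gronwall lemma: the induction $\mathfrak b \leq \mathbb S^{n}\mathfrak b$ via monotonicity, convergence of the Picard iterates to the unique fixed point $\mathfrak b^{\ast}$, and passage to the limit using the closedness of $\leq$ with respect to the metric topology (which is indeed part of the notion of ordered metric/$L$-space in Rus's framework). The paper itself gives no proof of this lemma---it is imported verbatim from the cited reference \cite{Rus2}---so there is nothing to compare against beyond noting that your proof is precisely the canonical one, and you have correctly isolated the only genuinely delicate point, namely the compatibility of the order with convergence.
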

\section{Uniqueness  result for the problem \eqref{aa}}\label{Sec3}
Before going to our main results, we state the following special linear cases of the problem \eqref{aa}.
\begin{lemma}\label{LEU}For a given $h \in  C( \Omega,\mathbb R), 0<\kappa<\mu\leq1$  and  $\varrho>0$,  the linear  $\Phi$--Caputo FMTDE
\begin{equation}\label{al}
\begin{cases}
{^{c}\mathbb{D}}_{\mathfrak {m}^{+}}^{\mu ;\Phi }\mathfrak {z} (\ell )+\varrho\, {^{c}\mathbb{D}}_{\mathfrak {m}^{+}}^{\kappa ;\Phi }\mathfrak {z} (\ell )=h(\ell),\;\ell \in  [\mathfrak {m}, \mathfrak {n}], \\
\mathfrak {z}(\ell)=\alpha(\ell), \;\ell\in [\mathfrak {m}- \sigma, \mathfrak {m}]
\end{cases}
\end{equation}
has a unique   solution given explicitly  as	\begin{align*}
 \mathfrak {z}(\ell)=\left\{
\begin{matrix}
\alpha(\mathfrak {m})+\int_{\mathfrak {m}}^{\ell}\mathbb W_{\Phi}^{\mu}(\ell, \eta)\mathbb M_{\mu-\kappa,\mu}\bigl(-\varrho(\Phi(\ell)-\Phi(\eta))^{\mu-\kappa}\bigr)h(\eta)\mathrm{d}\eta &,&\; \ell\in   [\mathfrak {m}, \mathfrak {n}],\\
\alpha(\ell)&,& \;\ell\in [\mathfrak {m}- \sigma, \mathfrak {m}],
\end{matrix}\right.
	\end{align*}
where
\[
\mathbb W_{\Phi}^{\mu}(\ell, \eta)=\Phi ^{\prime }(\eta)(\Phi (\ell)-\Phi (\eta))^{\mu -1}.
\]
\end{lemma}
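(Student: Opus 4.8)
The plan is to solve \eqref{al} by the generalized Laplace transform $\mathbb{L}_{\Phi}$, reduce it to an algebraic equation for $\mathbb{L}_{\Phi}\{\mathfrak{z}\}$, and then invert via the convolution theorem together with the Mittag--Leffler transform identities of Lemma \ref{LaplacePsi}. Note first that the finite delay plays no role in the \emph{linear} problem \eqref{al}: the right-hand side is the given $h$, and because both $\Phi$--Caputo derivatives are anchored at $\mathfrak{m}$, the only piece of the history $\alpha$ that influences the solution on $[\mathfrak{m},\mathfrak{n}]$ is the initial value $\mathfrak{z}(\mathfrak{m})=\alpha(\mathfrak{m})$; on $[\mathfrak{m}-\sigma,\mathfrak{m}]$ the solution is prescribed to be $\alpha$ verbatim. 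Throughout, the transform is understood after extending $h$ to a function of $\Phi(\ell)$--exponential order on $[\mathfrak{m},\infty)$, the resulting formula being independent of the extension on $[\mathfrak{m},\mathfrak{n}]$.

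Writing $\widetilde{\mathfrak{z}}=\mathbb{L}_{\Phi}\{\mathfrak{z}\}$ and $\widetilde{h}=\mathbb{L}_{\Phi}\{h\}$ and applying item 6 of Lemma \ref{LaplacePsi} to each Caputo term, I would obtain
\[
\bigl(\lambda^{\mu}+\varrho\lambda^{\kappa}\bigr)\widetilde{\mathfrak{z}}
=\widetilde{h}+\bigl(\lambda^{\mu-1}+\varrho\lambda^{\kappa-1}\bigr)\alpha(\mathfrak{m}).
\]
The decisive simplification is the cancellation
\[
\frac{\lambda^{\mu-1}+\varrho\lambda^{\kappa-1}}{\lambda^{\mu}+\varrho\lambda^{\kappa}}
=\frac{\lambda^{\kappa-1}\bigl(\lambda^{\mu-\kappa}+\varrho\bigr)}{\lambda^{\kappa}\bigl(\lambda^{\mu-\kappa}+\varrho\bigr)}
=\frac{1}{\lambda},
\]
whence $\widetilde{\mathfrak{z}}=\dfrac{\alpha(\mathfrak{m})}{\lambda}+\dfrac{\widetilde{h}}{\lambda^{\mu}+\varrho\lambda^{\kappa}}$. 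By item 1 of Lemma \ref{LaplacePsi} the first summand inverts to the constant $\alpha(\mathfrak{m})$.

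For the second summand I would recast the multiplier as $\dfrac{1}{\lambda^{\mu}+\varrho\lambda^{\kappa}}=\dfrac{\lambda^{(\mu-\kappa)-\mu}}{\lambda^{\mu-\kappa}+\varrho}$ and match it against item 4 of Lemma \ref{LaplacePsi} with $\mathrm p=\mu-\kappa$, $\mathrm q=\mu$ and the $+\varrho$ (hence $-\varrho$ in the argument) sign, which is licit for $\lambda$ large enough that $|\varrho/\lambda^{\mu-\kappa}|<1$. This identifies $1/(\lambda^{\mu}+\varrho\lambda^{\kappa})$ as the transform of
\[
G(\ell)=\bigl(\Phi(\ell)-\Phi(\mathfrak{m})\bigr)^{\mu-1}
\mathbb{M}_{\mu-\kappa,\mu}\bigl(-\varrho(\Phi(\ell)-\Phi(\mathfrak{m}))^{\mu-\kappa}\bigr).
\]
The convolution theorem for $\mathbb{L}_{\Phi}$ then gives $\mathfrak{z}=\alpha(\mathfrak{m})+h\ast_{\Phi}G$, and the last step is to unfold the $\Phi$--convolution: setting $\xi=\Phi^{-1}\bigl(\Phi(\ell)+\Phi(\mathfrak{m})-\Phi(\eta)\bigr)$ gives $\Phi(\xi)-\Phi(\mathfrak{m})=\Phi(\ell)-\Phi(\eta)$, so $G(\xi)$ collapses to $(\Phi(\ell)-\Phi(\eta))^{\mu-1}\mathbb{M}_{\mu-\kappa,\mu}\bigl(-\varrho(\Phi(\ell)-\Phi(\eta))^{\mu-\kappa}\bigr)$; combined with the factor $\Phi'(\eta)$ this is exactly $\mathbb{W}_{\Phi}^{\mu}(\ell,\eta)\,\mathbb{M}_{\mu-\kappa,\mu}(\cdots)$, producing the asserted representation. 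Uniqueness is immediate once one records that $\mathbb{L}_{\Phi}$ is injective on continuous functions of $\Phi$--exponential order: any two solutions share the transform $\widetilde{\mathfrak{z}}$ and hence coincide; equivalently, one may substitute the formula into \eqref{al} using Lemma \ref{LMA2} to confirm existence and note that the homogeneous problem ($h\equiv0,\ \alpha(\mathfrak{m})=0$) forces $\mathfrak{z}\equiv0$.

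I expect the two most delicate points to be the sign-and-parameter bookkeeping that identifies $1/(\lambda^{\mu}+\varrho\lambda^{\kappa})$ with $\mathbb{L}_{\Phi}\{G\}$ (reading item 4 of Lemma \ref{LaplacePsi} with the correct choice of $\mp$ and checking the convergence region $|\varrho/\lambda^{\mu-\kappa}|<1$), and the change of variables inside the $\Phi$--convolution, which is the precise mechanism converting the single-variable kernel $G$ evaluated at the shifted $\Phi$--argument into the two-variable kernel $(\Phi(\ell)-\Phi(\eta))^{\mu-1}$. A minor technical caveat is the mismatch between the half-line $[\mathfrak{m},\infty)$ on which $\mathbb{L}_{\Phi}$ is defined and the bounded interval $[\mathfrak{m},\mathfrak{n}]$ of the problem, handled by the exponential-order extension mentioned above.
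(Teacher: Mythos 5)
Your proposal is correct and follows essentially the same route as the paper: apply the generalized Laplace transform with Lemma \ref{LaplacePsi}, identify the $h$-term via item 4 and the convolution theorem, and invert. The only (harmless, and in fact slightly cleaner) difference is that you cancel the initial-value coefficient to $1/\lambda$ directly in the transform domain, whereas the paper inverts it as $\mathbb M_{\mu-\kappa}+\varrho(\Phi(\ell)-\Phi(\mathfrak m))^{\mu-\kappa}\mathbb M_{\mu-\kappa,\mu-\kappa+1}$ and then collapses this to $1$ using part 2 of Lemma \ref{PMitagg}.
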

\begin{proof}
Applying the generalized Laplace transform to both sides of the first equation of \eqref{al} and using Lemma \ref{LaplacePsi}, we obtain
	\begin{equation*}
\lambda^{\mu}\mathbb{L}_{\Phi} \bigl\lbrace {\mathfrak {z} (\ell )} \bigr\rbrace-\lambda^{\mu-1}\mathfrak {z} (\mathfrak {m})+\varrho\lambda^{\kappa}\mathbb{L}_{\Phi} \bigl\lbrace {\mathfrak {z} (\ell )} \bigr\rbrace-\varrho\lambda^{\kappa-1}\mathfrak {z} (\mathfrak {m})= \mathbb{L}_{\Phi} \bigl\lbrace {h (\ell )} \bigr\rbrace.	
 \end{equation*}
So,  
  \begin{align*}
\mathbb{L}_{\Phi} \bigl\lbrace {\mathfrak {z} (\ell )} \bigr\rbrace&=\varrho\frac{\lambda^{-1}}{\lambda^{\mu-\kappa}+\varrho} \alpha(\mathfrak{m})+\frac{\lambda^{\mu-\kappa-1}}{\lambda^{\mu-\kappa}+\varrho} \alpha(\mathfrak{m})+\frac{\lambda^{-\kappa}}{\lambda^{\mu-\kappa}+\varrho}\mathbb{L}_{\Phi} \bigl\lbrace {h (\ell )} \bigr\rbrace\\
&=\varrho\mathbb{L}_{\Phi} \bigl\lbrace {(\Phi (\ell)-\Phi (\mathfrak{m}))^{\mu-\kappa}\mathbb M_{\mu-\kappa,\mu-\kappa+1}\bigl(-\varrho(\Phi(\ell)-\Phi(\mathfrak {m}))^{\mu-\kappa}\bigr)} \bigr\rbrace	 \alpha(\mathfrak{m})\\
&+\mathbb{L}_{\Phi} \bigl\lbrace {\mathbb{M}_{\mu-\kappa}\bigl(-\varrho(\Phi(\ell)-\Phi(\mathfrak {m}))
^{\mu-\kappa}\bigr)} \bigr\rbrace \alpha(\mathfrak{m})\\
&+\mathbb{L}_{\Phi} \bigl\lbrace {(\Phi (\ell)-\Phi (\mathfrak {m}))^{\mu-1}\mathbb M_{\mu-\kappa,\mu}\bigl(-\varrho(\Phi(\ell)-\Phi(\mathfrak {m}))^{\mu-\kappa}\bigr)} \bigr\rbrace\mathbb{L}_{\Phi} \bigl\lbrace {h (\ell )} \bigr\rbrace.
 \end{align*}
 Taking the inverse generalized Laplace transform  to both sides of the last expression, we get 
 \begin{align*}
\mathfrak {z} (\ell )&=\left(\mathbb M_{\mu-\kappa}\bigl(-\varrho(\Phi(\ell)-\Phi(\mathfrak {m}))^{\mu-\kappa}\bigr)+\varrho(\Phi (\ell)-\Phi (\mathfrak {m}))^{\mu-\kappa}\mathbb M_{\mu-\kappa,\mu-\kappa+1}\bigl(-\varrho(\Phi(\ell)-\Phi(\mathfrak {m}))^{\mu-\kappa}\bigr)
\right) \alpha(\mathfrak{m})\\
&+h(\ell )\ast_{\Phi}(\Phi (\ell)-\Phi (\mathfrak {m}))^{\mu-1}\mathbb M_{\mu-\kappa,\mu}\bigl(-\varrho(\Phi(\ell)-\Phi(\mathfrak {m}))^{\mu-\kappa}\bigr)\\
&= \alpha(\mathfrak{m})+\int_{\mathfrak {m}}^{\ell}\Phi ^{\prime }(\eta)(\Phi (\ell)-\Phi (\eta))^{\mu -1}\mathbb M_{\mu-\kappa,\mu}\bigl(-\varrho(\Phi(\ell)-\Phi(\eta))^{\mu-\kappa}\bigr)
h(\eta)\mathrm{d}\eta,\; \ell \in  [\mathfrak {m}, \mathfrak {n}].
 \end{align*}
This ends the proof of Lemma \ref{LEU}.
\end{proof} 
As a result of Lemma \ref{LEU}, the problem \eqref{aa}  can be  converted to an  integral equation which takes the following
 form 
 \begin{equation}\label{operaQ}
 \mathfrak {z}(\ell)=\left\{
\begin{matrix}
\alpha(\mathfrak {m})+\int_{\mathfrak {m}}^{\ell}\mathbb W_{\Phi}^{\mu}(\ell, \eta)\mathbb M_{\mu-\kappa,\mu}\bigl(-\varrho(\Phi(\ell)-\Phi(\eta))^{\mu-\kappa}\bigr)\\
\times\mathbb{Q}\bigl(\eta ,\mathfrak {z}
(\eta ), \mathfrak {z}(\mathfrak {f}
(\eta ))\bigr)\mathrm{d}\eta &,&\; \ell\in   [\mathfrak {m}, \mathfrak {n}],\\
\alpha(\ell)&,& \;\ell\in [\mathfrak {m}- \sigma, \mathfrak {m}].
\end{matrix}\right.
   \end{equation}
We are now in position to present and prove our main results.
\begin{theorem}\label{banach} Assume that the following statements are valid:
\begin{enumerate}[(H1)]
\item The function $\mathfrak {f}: \Omega  \longrightarrow [\mathfrak {m} -\sigma, \mathfrak {n}] $  is continuous function with $ \mathfrak {f}
(\ell )\leq \ell$.  
\item The function $\mathbb Q: \Omega\times\mathbb R^2  \longrightarrow \mathbb R $  is continuous and there exist $\mathbb{L}_{\mathbb{Q}}>0$ such that
\[
|\mathbb{Q}(\ell, \mathfrak {b}_2, \mathfrak {a}_2) -\mathbb{Q}(\ell, \mathfrak {b}_1, \mathfrak {a}_1)| \leq \mathbb{L}_{\mathbb{Q}}\bigl(|\mathfrak {b}_2 -\mathfrak {b}_1|+|\mathfrak {a}_2 -\mathfrak {a}_1|\bigr),\quad  \ell \in \Omega , \mathfrak {a}_1,  \mathfrak {a}_2, \mathfrak {b}_1, \mathfrak {b}_2\in  \mathbb R.
\]
\end{enumerate}
Then the problem \eqref{aa}  possesses a unique solution which belong to the space $\mathcal {Y}\cap\mathcal {X}$.

\end{theorem}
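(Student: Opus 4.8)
The plan is to recast \eqref{aa} as a fixed-point equation for the integral operator dictated by \eqref{operaQ} and then apply the Banach contraction principle, with the Bielecki-type norm \eqref{Bielecki} playing the role that neutralizes the length of $\Omega$. Concretely, I would work on the closed (hence complete) subset $\mathcal{Y}_{\alpha}=\{\mathfrak{z}\in\mathcal{Y}:\mathfrak{z}(\ell)=\alpha(\ell)\ \text{on}\ [\mathfrak{m}-\sigma,\mathfrak{m}]\}$ and define $\mathcal{T}\colon\mathcal{Y}_{\alpha}\to\mathcal{Y}_{\alpha}$ by $\mathcal{T}\mathfrak{z}(\ell)=\alpha(\ell)$ on the delay interval and
\[
\mathcal{T}\mathfrak{z}(\ell)=\alpha(\mathfrak{m})+\int_{\mathfrak{m}}^{\ell}\mathbb{W}_{\Phi}^{\mu}(\ell,\eta)\,\mathbb{M}_{\mu-\kappa,\mu}\bigl(-\varrho(\Phi(\ell)-\Phi(\eta))^{\mu-\kappa}\bigr)\,\mathbb{Q}\bigl(\eta,\mathfrak{z}(\eta),\mathfrak{z}(\mathfrak{f}(\eta))\bigr)\,\mathrm{d}\eta
\]
for $\ell\in[\mathfrak{m},\mathfrak{n}]$. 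By Lemma \ref{LEU}, the fixed points of $\mathcal{T}$ are exactly the solutions of \eqref{aa}, so it suffices to prove that $\mathcal{T}$ has a unique fixed point. Continuity of $\mathbb{Q}$ and $\mathfrak{f}$ from (H1)--(H2), together with the nonnegativity of the Mittag--Leffler kernel, shows at once that $\mathcal{T}$ is well defined and carries $\mathcal{Y}_{\alpha}$ into itself.

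Next I would estimate $|\mathcal{T}\mathfrak{z}_{1}(\ell)-\mathcal{T}\mathfrak{z}_{2}(\ell)|$ for $\mathfrak{z}_{1},\mathfrak{z}_{2}\in\mathcal{Y}_{\alpha}$. On $[\mathfrak{m}-\sigma,\mathfrak{m}]$ both images equal $\alpha$, so the difference vanishes there; for $\ell\in[\mathfrak{m},\mathfrak{n}]$ the Lipschitz hypothesis (H2) yields
\[
|\mathcal{T}\mathfrak{z}_{1}(\ell)-\mathcal{T}\mathfrak{z}_{2}(\ell)|\leq\mathbb{L}_{\mathbb{Q}}\int_{\mathfrak{m}}^{\ell}\mathbb{W}_{\Phi}^{\mu}(\ell,\eta)\,\mathbb{M}_{\mu-\kappa,\mu}\bigl(-\varrho(\Phi(\ell)-\Phi(\eta))^{\mu-\kappa}\bigr)\Bigl(|\mathfrak{z}_{1}(\eta)-\mathfrak{z}_{2}(\eta)|+|\mathfrak{z}_{1}(\mathfrak{f}(\eta))-\mathfrak{z}_{2}(\mathfrak{f}(\eta))|\Bigr)\,\mathrm{d}\eta.
\]
I would bound the kernel by $1/\Gamma(\mu)$ using part (1) of Lemma \ref{PMitagg} (the argument is negative), and replace each modulus via the definition of the Bielecki norm, $|\mathfrak{z}_{i}(\eta)|\leq\|\mathfrak{z}_{1}-\mathfrak{z}_{2}\|_{\mathcal{Y},\mathfrak{B},\mu}\,\mathbb{M}_{\mu}\bigl(\beta(\Phi(\eta)-\Phi(\mathfrak{m}))^{\mu}\bigr)$.

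The delicate point, and the main obstacle, is the delay term $|\mathfrak{z}_{1}(\mathfrak{f}(\eta))-\mathfrak{z}_{2}(\mathfrak{f}(\eta))|$: the weight $\mathbb{M}_{\mu}(\beta(\Phi(\cdot)-\Phi(\mathfrak{m}))^{\mu})$ is only directly usable for arguments in $\Omega$. I would split on $\mathfrak{f}(\eta)$: if $\mathfrak{f}(\eta)<\mathfrak{m}$ both values equal $\alpha(\mathfrak{f}(\eta))$ and the difference is zero (this is precisely why restricting to $\mathcal{Y}_{\alpha}$ is convenient), while if $\mathfrak{f}(\eta)\in[\mathfrak{m},\mathfrak{n}]$ the inequality $\mathfrak{f}(\eta)\leq\eta$ from (H1), the monotonicity of $\Phi$, and the monotonicity of $\mathbb{M}_{\mu}$ on $[0,\infty)$ give $\mathbb{M}_{\mu}(\beta(\Phi(\mathfrak{f}(\eta))-\Phi(\mathfrak{m}))^{\mu})\leq\mathbb{M}_{\mu}(\beta(\Phi(\eta)-\Phi(\mathfrak{m}))^{\mu})$. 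Hence both moduli are dominated by $\|\mathfrak{z}_{1}-\mathfrak{z}_{2}\|_{\mathcal{Y},\mathfrak{B},\mu}\,\mathbb{M}_{\mu}(\beta(\Phi(\eta)-\Phi(\mathfrak{m}))^{\mu})$, producing a harmless factor of $2$.

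Collecting the estimates, the remaining integral is exactly $\Gamma(\mu)\,\mathbb{I}_{\mathfrak{m}^{+}}^{\mu;\Phi}\bigl(\mathbb{M}_{\mu}(\beta(\Phi(\cdot)-\Phi(\mathfrak{m}))^{\mu})\bigr)(\ell)$, which by part (5) of Lemma \ref{LMA2} equals $\frac{\Gamma(\mu)}{\beta}\bigl(\mathbb{M}_{\mu}(\beta(\Phi(\ell)-\Phi(\mathfrak{m}))^{\mu})-1\bigr)$. Dividing by $\mathbb{M}_{\mu}(\beta(\Phi(\ell)-\Phi(\mathfrak{m}))^{\mu})$ and taking the supremum over $[\mathfrak{m}-\sigma,\mathfrak{n}]$ gives
\[
\|\mathcal{T}\mathfrak{z}_{1}-\mathcal{T}\mathfrak{z}_{2}\|_{\mathcal{Y},\mathfrak{B},\mu}\leq\frac{2\mathbb{L}_{\mathbb{Q}}}{\beta}\,\|\mathfrak{z}_{1}-\mathfrak{z}_{2}\|_{\mathcal{Y},\mathfrak{B},\mu}.
\]
Choosing the free Bielecki parameter $\beta>2\mathbb{L}_{\mathbb{Q}}$ makes the constant strictly less than one, so $\mathcal{T}$ is a contraction; Banach's fixed point theorem then delivers a unique fixed point. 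Since this fixed point is continuous on $[\mathfrak{m}-\sigma,\mathfrak{n}]$ and its restriction to $\Omega$ is continuous, it belongs to $\mathcal{Y}\cap\mathcal{X}$, which completes the argument.
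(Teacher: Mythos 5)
Your proposal is correct and follows essentially the same route as the paper: the fixed-point reformulation via Lemma \ref{LEU}, the kernel bound $\mathbb{M}_{\mu-\kappa,\mu}(\cdot)\leq 1/\Gamma(\mu)$, the identity in part (5) of Lemma \ref{LMA2}, and the choice $\beta>2\mathbb{L}_{\mathbb{Q}}$ are exactly the paper's steps. Your only (welcome) refinements are restricting to the closed subset $\mathcal{Y}_{\alpha}$ and explicitly splitting the delay term according to whether $\mathfrak{f}(\eta)$ falls in $[\mathfrak{m}-\sigma,\mathfrak{m}]$ or in $\Omega$, details the paper leaves implicit.
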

\begin{proof}Transform the integral representation \eqref{operaQ} of the problem \eqref{aa} into a fixed point problem as follows:
\[
\mathfrak {z}= \mathbb P\mathfrak {z}, \quad \mathfrak {z} \in  \mathcal  {Y},
\]
where $\mathbb  P: \mathcal  {Y} \longrightarrow  \mathcal  {Y}$  is defined by
 \begin{equation}\label{opera}
 \mathbb  P\mathfrak {z}(\ell)=\left\{
\begin{matrix}
\alpha(\mathfrak {m})+\int_{\mathfrak {m}}^{\ell}\mathbb W_{\Phi}^{\mu}(\ell, \eta)\mathbb M_{\mu-\kappa,\mu}\bigl(-\varrho(\Phi(\ell)-\Phi(\eta))^{\mu-\kappa}\bigr)\\
\times\mathbb{Q}\bigl(\eta ,\mathfrak {z}
(\eta ), \mathfrak {z}(\mathfrak {f}
(\eta ))\bigr)\mathrm{d}\eta &,&\; \ell\in   [\mathfrak {m}, \mathfrak {n}],\\
\alpha(\ell)&,& \;\ell\in [\mathfrak {m}- \sigma, \mathfrak {m}].
\end{matrix}\right.
   \end{equation}

Clearly, the operator $\mathbb P$  is well-defined.  Moreover, the existence of a fixed point for the operator $\mathbb P$ will ensure the existence of the solution of the problem \eqref{aa}.  Our aim is to check  that $\mathbb P$ is a contraction operator with respect to the $\Phi$-fractional Bielecki-type norm.  Note that by definition of operator $\mathbb P$, for any $\mathfrak {z}_{1}, \mathfrak {z}_{2} \in \mathcal Y$ we have
\begin{equation*}
|\mathbb P \mathfrak {z}_{2}(\ell)-\mathbb P \mathfrak {z}_{1}(\ell)|=0, \quad \mbox{for all}\; \ell \in [\mathfrak {m} -\sigma, \mathfrak {m}].
 \end{equation*}
 On the other hand, keeping in mind the definition of  the operator  $\mathbb P$ on $ [\mathfrak {m}, \mathfrak {n}]$ together with  assumptions  {\rm(H1)}, (\rm{H2}) and Lemmas \ref{PMitagg},\ref{LMA2} we  can get
 \begin{align*}
|\mathbb P \mathfrak {z}_{2}(\ell)-\mathbb P \mathfrak {z}_{1}(\ell)|
&\leq2\mathbb L_{\mathbb Q}\| \mathfrak {z}_{2}-\mathfrak {z}_{1}\|_{\mathcal X, \mathfrak B, \mu}\int_{\mathfrak {m}}^{\ell}\frac{\mathbb W_{\Phi}^{\mu}(\ell, \eta)}{\Gamma(\mu)}\mathbb M_{\mu}
\bigl(\beta(\Phi(\eta)-\Phi(\mathfrak {m}))^{\mu}\bigr)\mathrm{d\eta}\\
&\leq \frac{2\mathbb L_{\mathbb Q}}{\beta}\left[\mathbb M_{\mu}
\bigl(\beta(\Phi(\ell)-\Phi(\mathfrak {m}))^{\mu}\bigr)-1\right]\|\mathfrak {z}_{2}-\mathfrak {z}_{1}\|_{\mathcal X, \mathfrak B, \mu}.
 \end{align*}
 Hence, the above inequality yields
 \begin{align*}
\|\mathbb P \mathfrak {z}_{2}-\mathbb P \mathfrak {z}_{1}\|_{\mathcal X, \mathfrak B, \mu}
&\leq \frac{2\mathbb L_{\mathbb Q}}{\beta}\| \mathfrak {z}_{2}-\mathfrak {z}_{1}\|_{\mathcal X, \mathfrak B, \mu}.
 \end{align*}
Thus
\begin{align*}
\|\mathbb P \mathfrak {z}_{2}-\mathbb P \mathfrak {z}_{1}\|_{\mathcal Y, \mathfrak B, \mu}
&\leq \frac{2\mathbb L_{\mathbb Q}}{\beta}\| \mathfrak {z}_{2}-\mathfrak {z}_{1}\|_{\mathcal Y, \mathfrak B, \mu}.
 \end{align*}
Let us choose $\beta>0$ such that $\frac{2\mathbb L_{\mathbb Q}}{\beta}<1$. It is easy to see that  the operator $\mathbb P$ is a contraction with respect to Bielecki's norm $\| \cdot \|_{\mathcal Y, \mathfrak B, \mu}$. Now, by applying the Banach's fixed point theorem, we can  find that $\mathbb P$ has a unique fxed point, and thus the problem \eqref{aa} has a unique  solution in the space $\mathcal {Y}\cap\mathcal {X}$. This completes the proof.
\end{proof}
\begin{remark}
 Notice that in our analysis we don\rq{}t assume that $\frac{2\mathbb L_{\mathbb Q}\left(\Phi(\mathfrak {n})-\Phi(\mathfrak {m})\right)^{\mu}}{\Gamma(\mu+1)}<1$ in Theorem \ref{banach}, while it is required in Theorem 3.4 in the article of Wang\ and\  Zhang \cite{WangMGS}.
\end{remark}
\section{ Ulam-Hyers-Mittag-Leffler stability results for the problem \eqref{aa}}\label{Sec4}
Motivated by \cite{UlamS,WangMGS}, we introduce the Ulam--Hyers--Mittag-Leffler stability
of solutions to our   problem \eqref{aa}.

 Let $\varepsilon, \varrho>0$ and $\zeta: \Omega\to{\Bbb {R}}^{+}$,  be a continuous function. We focus on the following inequality:
\begin{align}\label{S1T2}
\begin{aligned}&  \textstyle \bigl\vert {^{c}\mathbb{D}}_{\mathfrak {m}^{+}}^{\mu ;\Phi }\tilde{\mathfrak {z}} (\ell )+\varrho\, {^{c}\mathbb{D}}_{\mathfrak {m}^{+}}^{\kappa ;\Phi }\tilde{\mathfrak {z}} (\ell )-\mathbb{Q}\bigl(\ell ,\tilde{\mathfrak {z}}
(\ell ), \tilde{\mathfrak {z}}(\mathfrak {f}
(\ell ))\bigr)\bigr\vert \leq\varepsilon\mathbb M_{\mu}\bigl((\Phi(\ell)-\Phi (\mathfrak {m}))^{\mu}\bigr), \quad \displaystyle \ell \in \Omega.
 \end{aligned}	
\end{align}
\begin{definition}[\cite{WangMGS}]\label{DF1T2}
Equation  \eqref{aa} is UHML stable, with respect to $\mathbb M_{\mu}\bigl((\Phi(\ell)-\Phi (\mathfrak {m})^{\mu}\bigr)$ if there exists a real number $c_{\mathbb M_{\mu}}>0$ such that, for each $\varepsilon>0$ and for each solution $\tilde{\mathfrak {z}}\in \mathcal {Y} $ of the inequality \eqref{S1T2}, there is a unique solution solution $\mathfrak {z}\in \mathcal {Y} $ of  Eq. \eqref{aa}  with
\[
\begin{cases}\bigl\vert \tilde{\mathfrak {z}}(\ell)-\mathfrak {z}(\ell)\bigr\vert=0, \quad \ell\in [\mathfrak {m}- \sigma, \mathfrak {m}],\\
\bigl\vert \tilde{\mathfrak {z}}(\ell)-\mathfrak {z}(\ell)\bigr\vert \leq c_{\mathbb M_{\mu}}\varepsilon \mathbb M_{\mu}\bigl((\Phi(\ell)-\Phi(\mathfrak {m}))^{\mu}\bigr),\quad \ell \in [\mathfrak {m}, \mathfrak {n}].
\end{cases}
\]
\end{definition}

\begin{remark}[\cite{WangMGS}] \label{RSE1} A function $\tilde{\mathfrak {z}} \in \mathcal {X}$ is a solution of inequality \eqref{S1T2} if and only if there exists a function $\Theta \in\mathcal {X}$ (which depends on solution $\tilde{\mathfrak {z}}$) such that
\begin{enumerate}[(i)]
\item $|\Theta(\ell)| \leq \varepsilon \mathbb M_{\mu}\bigl((\Phi(\ell)-\Phi(\mathfrak {m}))^{\mu}\bigr), \quad \ell \in  \Omega$,
\item ${^{c}\mathbb{D}}_{\mathfrak {m}^{+}}^{\mu ;\Phi }\tilde{\mathfrak {z}} (\ell )+\varrho\, {^{c}\mathbb{D}}_{\mathfrak {m}^{+}}^{\kappa ;\Phi }\tilde{\mathfrak {z}} (\ell )=\mathbb{Q}\bigl(\ell ,\tilde{\mathfrak {z}}
(\ell ), \tilde{\mathfrak {z}}(\mathfrak {f}
(\ell ))\bigr)+\Theta(\ell), \quad \ell \in  \Omega$.
\end{enumerate}
\end{remark}
\begin{lemma}\label{LHUS}Let $\tilde{\mathfrak {z}} \in \mathcal {X}$ be a solution of of inequality \eqref{S1T2},  then $\tilde{\mathfrak {z}}$ satisfes the following integral inequality
  \begin{align*}
\begin{aligned}
&\left\vert\tilde{\mathfrak {z}}(\ell)-\tilde{\mathfrak {z}}(\mathfrak {m})-\int_{\mathfrak {m}}^{\ell}\mathbb W_{\Phi}^{\mu}(\ell, \eta)\mathbb M_{\mu-\kappa,\mu}\bigl(-\varrho(\Phi(\ell)-\Phi(\eta))^{\mu-\kappa}\bigr)\mathbb{Q}\bigl(\eta ,\tilde{\mathfrak {z}}
(\eta ), \tilde{\mathfrak {z}}(\mathfrak {f}
(\eta ))\bigr)\mathrm{d}\eta\right\vert\\
&\leq \varepsilon\mathbb M_{\mu}\bigl((\Phi(\ell)-\Phi(\mathfrak {m}))^{\mu}\bigr).
 \end{aligned}
 \end{align*}
\end{lemma}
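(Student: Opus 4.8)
The plan is to reduce the differential inequality \eqref{S1T2} to an exact \emph{perturbed} equation and then feed it into the representation formula of Lemma \ref{LEU}. First, since $\tilde{\mathfrak{z}}$ solves \eqref{S1T2}, Remark \ref{RSE1} supplies a function $\Theta\in\mathcal{X}$ with $|\Theta(\ell)|\leq\varepsilon\mathbb M_{\mu}\bigl((\Phi(\ell)-\Phi(\mathfrak{m}))^{\mu}\bigr)$ on $\Omega$ and
\[
{^{c}\mathbb{D}}_{\mathfrak{m}^{+}}^{\mu;\Phi}\tilde{\mathfrak{z}}(\ell)+\varrho\,{^{c}\mathbb{D}}_{\mathfrak{m}^{+}}^{\kappa;\Phi}\tilde{\mathfrak{z}}(\ell)=\mathbb{Q}\bigl(\ell,\tilde{\mathfrak{z}}(\ell),\tilde{\mathfrak{z}}(\mathfrak{f}(\ell))\bigr)+\Theta(\ell).
\]
Thus on $[\mathfrak{m},\mathfrak{n}]$ the function $\tilde{\mathfrak{z}}$ satisfies exactly the linear problem \eqref{al} with forcing term $h(\ell)=\mathbb{Q}\bigl(\ell,\tilde{\mathfrak{z}}(\ell),\tilde{\mathfrak{z}}(\mathfrak{f}(\ell))\bigr)+\Theta(\ell)$, the only difference being that no delay datum on $[\mathfrak{m}-\sigma,\mathfrak{m}]$ is prescribed.

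Second, I would invoke the explicit representation from Lemma \ref{LEU}, whose Laplace-transform derivation uses the boundary information only through the single value at the left endpoint. Hence, with $\tilde{\mathfrak{z}}(\mathfrak{m})$ playing the role of $\alpha(\mathfrak{m})$, for $\ell\in[\mathfrak{m},\mathfrak{n}]$ we obtain
\[
\tilde{\mathfrak{z}}(\ell)=\tilde{\mathfrak{z}}(\mathfrak{m})+\int_{\mathfrak{m}}^{\ell}\mathbb W_{\Phi}^{\mu}(\ell,\eta)\,\mathbb M_{\mu-\kappa,\mu}\bigl(-\varrho(\Phi(\ell)-\Phi(\eta))^{\mu-\kappa}\bigr)\bigl[\mathbb{Q}\bigl(\eta,\tilde{\mathfrak{z}}(\eta),\tilde{\mathfrak{z}}(\mathfrak{f}(\eta))\bigr)+\Theta(\eta)\bigr]\mathrm{d}\eta.
\]
Transposing the $\tilde{\mathfrak{z}}(\mathfrak{m})$ term and the $\mathbb{Q}$-contribution to the left-hand side isolates precisely the quantity appearing in the statement, which after taking absolute values is controlled by
\[
\int_{\mathfrak{m}}^{\ell}\mathbb W_{\Phi}^{\mu}(\ell,\eta)\,\mathbb M_{\mu-\kappa,\mu}\bigl(-\varrho(\Phi(\ell)-\Phi(\eta))^{\mu-\kappa}\bigr)\,|\Theta(\eta)|\,\mathrm{d}\eta,
\]
where I use that both the kernel $\mathbb W_{\Phi}^{\mu}(\ell,\eta)=\Phi'(\eta)(\Phi(\ell)-\Phi(\eta))^{\mu-1}\geq 0$ and the Mittag--Leffler factor (evaluated at a nonpositive argument, hence nonnegative by Lemma \ref{PMitagg}) are nonnegative, so the absolute value may be moved inside the integral.

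Finally, I would estimate this integral by chaining the size bounds for the M-LFs. By Lemma \ref{PMitagg}(1) the factor $\mathbb M_{\mu-\kappa,\mu}$ of a nonpositive argument is at most $1/\Gamma(\mu)$, and property (i) of Remark \ref{RSE1} gives $|\Theta(\eta)|\leq\varepsilon\mathbb M_{\mu}\bigl((\Phi(\eta)-\Phi(\mathfrak{m}))^{\mu}\bigr)$. Consequently the right-hand side is dominated by
\[
\frac{\varepsilon}{\Gamma(\mu)}\int_{\mathfrak{m}}^{\ell}\Phi'(\eta)(\Phi(\ell)-\Phi(\eta))^{\mu-1}\mathbb M_{\mu}\bigl((\Phi(\eta)-\Phi(\mathfrak{m}))^{\mu}\bigr)\mathrm{d}\eta=\varepsilon\,\mathbb I_{\mathfrak{m}^{+}}^{\mu;\Phi}\mathbb M_{\mu}\bigl((\Phi(\ell)-\Phi(\mathfrak{m}))^{\mu}\bigr),
\]
and the estimate in Lemma \ref{LMA2}(6) bounds the latter by $\varepsilon\,\mathbb M_{\mu}\bigl((\Phi(\ell)-\Phi(\mathfrak{m}))^{\mu}\bigr)$, which is exactly the claimed inequality. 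I expect no essential obstacle: the argument is a direct composition of the representation formula with the nonnegativity and magnitude estimates for the Mittag--Leffler functions. The one point requiring care is justifying that Lemma \ref{LEU} is applicable to $\tilde{\mathfrak{z}}$ even though it need not satisfy the delay initial condition; this is harmless because the representation on $[\mathfrak{m},\mathfrak{n}]$ depends on the data only through $\tilde{\mathfrak{z}}(\mathfrak{m})$ and the values of $h$ on $[\mathfrak{m},\mathfrak{n}]$.
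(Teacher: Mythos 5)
Your proposal is correct and follows essentially the same route as the paper: invoke Remark \ref{RSE1} to obtain the perturbed equation, apply the representation formula of Lemma \ref{LEU} with forcing term $\mathbb{Q}+\Theta$, bound the residual integral by $\varepsilon\,\mathbb I_{\mathfrak{m}^{+}}^{\mu;\Phi}\mathbb M_{\mu}\bigl((\Phi(\ell)-\Phi(\mathfrak{m}))^{\mu}\bigr)$ via Lemma \ref{PMitagg}, and finish with Lemma \ref{LMA2}(6). You are merely more explicit than the paper about the nonnegativity of the kernel and the $1/\Gamma(\mu)$ bound on $\mathbb M_{\mu-\kappa,\mu}$, which is a welcome clarification rather than a deviation.
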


\begin{proof}In fact, by the second part of Remark \ref{RSE1}, we have
\begin{align}\label{SOP1}
{^{c}\mathbb{D}}_{\mathfrak {m}^{+}}^{\mu ;\Phi }\tilde{\mathfrak {z}} (\ell )+\varrho\, {^{c}\mathbb{D}}_{\mathfrak {m}^{+}}^{\kappa ;\Phi }\tilde{\mathfrak {z}} (\ell )=\mathbb{Q}\bigl(\ell ,\tilde{\mathfrak {z}}
(\ell ), \tilde{\mathfrak {z}}(\mathfrak {f}
(\ell ))\bigr)+\Theta(\ell), \quad \ell \in  \Omega.	
\end{align}
   Thanks to  Lemma \ref{LEU}, the integral representation of \eqref{SOP1} is expressed as
\begin{align}\label{SPert}
\begin{aligned}
\tilde{\mathfrak {z}}(\ell)=&\tilde{\mathfrak {z}}(\mathfrak {m})+\int_{\mathfrak {m}}^{\ell}\mathbb W_{\Phi}^{\mu}(\ell, \eta)\mathbb M_{\mu-\kappa,\mu}\bigl(-\varrho(\Phi(\ell)-\Phi(\eta))^{\mu-\kappa}\bigr)\\
&\times\bigl\{\mathbb{Q}\bigl(\eta ,\tilde{\mathfrak {z}}
(\eta ), \tilde{\mathfrak {z}}(\mathfrak {f}
(\eta )\bigr)+\Theta(\eta)\bigr\}\mathrm{d}\eta.
\end{aligned}
\end{align}
 It follows from \eqref{SPert}, together with  the first part of Remark \ref{RSE1}, and Lemma \ref{PMitagg} that
 \begin{align*}
\begin{aligned}
&\left\vert\tilde{\mathfrak {z}}(\ell)-\tilde{\mathfrak {z}}(\mathfrak {m})-\int_{\mathfrak {m}}^{\ell}\mathbb W_{\Phi}^{\mu}(\ell, \eta)\mathbb M_{\mu-\kappa,\mu}\bigl(-\varrho(\Phi(\ell)-\Phi(\eta))^{\mu-\kappa}\bigr)\mathbb{Q}\bigl(\eta ,\tilde{\mathfrak {z}}
(\eta ), \tilde{\mathfrak {z}}(\mathfrak {f}
(\eta ))\bigr)\mathrm{d}\eta\right\vert\\
&\leq  \mathbb  I_{\mathfrak {m}^{+}}^{\mu ;\Phi }\Theta(\ell)\leq \varepsilon\mathbb  I_{\mathfrak {m}^{+}}^{\mu ;\Phi }\mathbb M_{\mu}\bigl((\Phi(\ell)-\Phi(\mathfrak {m}))^{\mu}\bigr).
 \end{aligned}
 \end{align*}
 Using  the sixth part of Lemma \ref{LMA2}, we can get
  \begin{align*}
\begin{aligned}
&\left\vert\tilde{\mathfrak {z}}(\ell)-\tilde{\mathfrak {z}}(\mathfrak {m})-\int_{\mathfrak {m}}^{\ell}\mathbb W_{\Phi}^{\mu}(\ell, \eta)\mathbb M_{\mu-\kappa,\mu}\bigl(-\varrho(\Phi(\ell)-\Phi(\eta))^{\mu-\kappa}\bigr)\mathbb{Q}\bigl(\eta ,\tilde{\mathfrak {z}}
(\eta ), \tilde{\mathfrak {z}}(\mathfrak {f}
(\eta ))\bigr)\mathrm{d}\eta\right\vert\\
&\leq \varepsilon\mathbb M_{\mu}\bigl((\Phi(\ell)-\Phi(\mathfrak {m}))^{\mu}\bigr).
 \end{aligned}
 \end{align*}

\end{proof}
Now, we discuss  the UHML stability of solutions 
 for  the problem \eqref{aa}. 
 
\begin{theorem}\label{USMT}
Under the assumptions of Theorem \ref{banach},  the problem \eqref{aa}is  UHML  stable.
\end{theorem}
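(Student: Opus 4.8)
The plan is to use the Picard operator from Theorem \ref{banach} to supply the comparison solution and then to convert the defect inequality of Lemma \ref{LHUS} into a genuine $\Phi$-fractional Gronwall inequality. Let $\tilde{\mathfrak{z}}\in\mathcal{Y}$ solve the inequality \eqref{S1T2}, and let $\mathfrak{z}\in\mathcal{Y}\cap\mathcal{X}$ be the unique solution of \eqref{aa} furnished by Theorem \ref{banach} for the initial datum $\alpha:=\tilde{\mathfrak{z}}|_{[\mathfrak{m}-\sigma,\mathfrak{m}]}$; equivalently $\mathfrak{z}=\mathbb{P}\mathfrak{z}$ for the operator $\mathbb{P}$ of \eqref{opera}. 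By this choice $\tilde{\mathfrak{z}}$ and $\mathfrak{z}$ coincide on $[\mathfrak{m}-\sigma,\mathfrak{m}]$, which gives $|\tilde{\mathfrak{z}}(\ell)-\mathfrak{z}(\ell)|=0$ there (the first clause of Definition \ref{DF1T2}) and in particular $\tilde{\mathfrak{z}}(\mathfrak{m})=\mathfrak{z}(\mathfrak{m})=\alpha(\mathfrak{m})$.

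For $\ell\in[\mathfrak{m},\mathfrak{n}]$ I would insert $\mathbb{P}\tilde{\mathfrak{z}}(\ell)$ and split by the triangle inequality. The quantity $|\tilde{\mathfrak{z}}(\ell)-\mathbb{P}\tilde{\mathfrak{z}}(\ell)|$ is exactly what Lemma \ref{LHUS} bounds (using $\tilde{\mathfrak{z}}(\mathfrak{m})=\alpha(\mathfrak{m})$), so it is at most $\varepsilon\mathbb M_{\mu}\bigl((\Phi(\ell)-\Phi(\mathfrak{m}))^{\mu}\bigr)$. For the remaining term $|\mathbb{P}\tilde{\mathfrak{z}}(\ell)-\mathbb{P}\mathfrak{z}(\ell)|$ I would invoke the Lipschitz hypothesis (H2), the bound $\mathbb M_{\mu-\kappa,\mu}(\vartheta)\le 1/\Gamma(\mu)$ for $\vartheta<0$ from part (1) of Lemma \ref{PMitagg}, and the nonnegativity of $\mathbb W_{\Phi}^{\mu}(\ell,\eta)$, to arrive at
\[
\bigl|\tilde{\mathfrak{z}}(\ell)-\mathfrak{z}(\ell)\bigr|\le \varepsilon\mathbb M_{\mu}\bigl((\Phi(\ell)-\Phi(\mathfrak{m}))^{\mu}\bigr)+\frac{\mathbb L_{\mathbb Q}}{\Gamma(\mu)}\int_{\mathfrak{m}}^{\ell}\mathbb W_{\Phi}^{\mu}(\ell,\eta)\Bigl(\bigl|\tilde{\mathfrak{z}}(\eta)-\mathfrak{z}(\eta)\bigr|+\bigl|\tilde{\mathfrak{z}}(\mathfrak{f}(\eta))-\mathfrak{z}(\mathfrak{f}(\eta))\bigr|\Bigr)\mathrm{d}\eta.
\]

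The main obstacle is the delayed term $|\tilde{\mathfrak{z}}(\mathfrak{f}(\eta))-\mathfrak{z}(\mathfrak{f}(\eta))|$, evaluated at the shifted point $\mathfrak{f}(\eta)\le\eta$, which is not of the form required by the Gronwall Lemma \ref{Gronwall1}. To handle it I would pass to the nondecreasing majorant $\psi(\ell):=\sup_{\mathfrak{m}\le s\le\ell}|\tilde{\mathfrak{z}}(s)-\mathfrak{z}(s)|$. Since the error vanishes on $[\mathfrak{m}-\sigma,\mathfrak{m}]$ and $\mathfrak{f}(\eta)\le\eta$, both $|\tilde{\mathfrak{z}}(\eta)-\mathfrak{z}(\eta)|$ and $|\tilde{\mathfrak{z}}(\mathfrak{f}(\eta))-\mathfrak{z}(\mathfrak{f}(\eta))|$ are dominated by $\psi(\eta)$. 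Using that $\mathbb M_{\mu}\bigl((\Phi(\cdot)-\Phi(\mathfrak{m}))^{\mu}\bigr)$ is nondecreasing and that the $\Phi$-fractional integral $\mathbb I_{\mathfrak{m}^{+}}^{\mu;\Phi}\psi$ of the nonnegative nondecreasing function $\psi$ is again nondecreasing, I may take the supremum over the subinterval $[\mathfrak{m},\ell]$ and obtain the authentic Gronwall form
\[
\psi(\ell)\le \varepsilon\mathbb M_{\mu}\bigl((\Phi(\ell)-\Phi(\mathfrak{m}))^{\mu}\bigr)+\frac{2\mathbb L_{\mathbb Q}}{\Gamma(\mu)}\int_{\mathfrak{m}}^{\ell}\Phi'(\eta)(\Phi(\ell)-\Phi(\eta))^{\mu-1}\psi(\eta)\,\mathrm{d}\eta.
\]

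Finally I would apply Corollary \ref{Gronwall2} with $\mathfrak c_{2}(\ell)=\varepsilon\mathbb M_{\mu}\bigl((\Phi(\ell)-\Phi(\mathfrak{m}))^{\mu}\bigr)$ (nonnegative and nondecreasing) and $\mathfrak c_{3}\equiv\tfrac{2\mathbb L_{\mathbb Q}}{\Gamma(\mu)}$, which gives $\psi(\ell)\le\varepsilon\mathbb M_{\mu}\bigl((\Phi(\ell)-\Phi(\mathfrak{m}))^{\mu}\bigr)\,\mathbb M_{\mu}\bigl(2\mathbb L_{\mathbb Q}(\Phi(\ell)-\Phi(\mathfrak{m}))^{\mu}\bigr)$. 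Because $\mathbb M_{\mu}$ is increasing and $\ell\le\mathfrak{n}$, the second factor is bounded on $\Omega$ by $c_{\mathbb M_{\mu}}:=\mathbb M_{\mu}\bigl(2\mathbb L_{\mathbb Q}(\Phi(\mathfrak{n})-\Phi(\mathfrak{m}))^{\mu}\bigr)$, and $|\tilde{\mathfrak{z}}(\ell)-\mathfrak{z}(\ell)|\le\psi(\ell)$ then yields precisely the estimate $|\tilde{\mathfrak{z}}(\ell)-\mathfrak{z}(\ell)|\le c_{\mathbb M_{\mu}}\varepsilon\mathbb M_{\mu}\bigl((\Phi(\ell)-\Phi(\mathfrak{m}))^{\mu}\bigr)$ demanded by Definition \ref{DF1T2}. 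As an equivalent packaging consistent with the method announced in the abstract, the same conclusion follows by introducing $\mathbb T u(\ell)=\varepsilon\mathbb M_{\mu}\bigl((\Phi(\ell)-\Phi(\mathfrak{m}))^{\mu}\bigr)+\tfrac{\mathbb L_{\mathbb Q}}{\Gamma(\mu)}\int_{\mathfrak{m}}^{\ell}\mathbb W_{\Phi}^{\mu}(\ell,\eta)\bigl(u(\eta)+u(\mathfrak{f}(\eta))\bigr)\mathrm{d}\eta$, verifying it is an increasing Bielecki-norm contraction (hence a Picard operator) exactly as in Theorem \ref{banach}, observing $\omega\le\mathbb T\omega$ for $\omega=|\tilde{\mathfrak{z}}-\mathfrak{z}|$, and invoking the abstract Gronwall Lemma \ref{AGL} to dominate $\omega$ by the fixed point of $\mathbb T$, which is estimated as above.
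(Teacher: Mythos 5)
Your proposal is correct, and it coincides with the paper's proof up to the integral inequality \eqref{OPRP1}: the same choice of comparison solution (sharing the history of $\tilde{\mathfrak z}$ on $[\mathfrak m-\sigma,\mathfrak m]$), the same splitting via $\mathbb P\tilde{\mathfrak z}$, Lemma \ref{LHUS}, (H2) and the bound $\mathbb M_{\mu-\kappa,\mu}(\vartheta)\le 1/\Gamma(\mu)$. Where you genuinely diverge is in how the delayed term and the Gronwall step are handled. The paper introduces the auxiliary operator $\mathbb S$ of \eqref{operaS} on $C([\mathfrak m-\sigma,\mathfrak n],\mathbb R_{+})$, proves it is a Bielecki-norm contraction (hence a Picard operator), argues that its fixed point $\mathfrak b^{\ast}$ is increasing so that $\mathfrak b^{\ast}(\mathfrak f(\eta))\le\mathfrak b^{\ast}(\eta)$, applies Corollary \ref{Gronwall2} to $\mathfrak b^{\ast}$, and finally invokes the abstract Gronwall lemma (Lemma \ref{AGL}) to pass from $|\tilde{\mathfrak z}-\mathfrak z|\le\mathbb S|\tilde{\mathfrak z}-\mathfrak z|$ to $|\tilde{\mathfrak z}-\mathfrak z|\le\mathfrak b^{\ast}$. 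You instead monotonize the unknown directly through the running supremum $\psi(\ell)=\sup_{\mathfrak m\le s\le\ell}|\tilde{\mathfrak z}(s)-\mathfrak z(s)|$, absorb the delay using $\mathfrak f(\eta)\le\eta$ together with the vanishing of the error on the history interval, and apply Corollary \ref{Gronwall2} once; both routes produce the same constant $c_{\mathbb M_{\mu}}=\mathbb M_{\mu}\bigl(2\mathbb L_{\mathbb Q}(\Phi(\mathfrak n)-\Phi(\mathfrak m))^{\mu}\bigr)$. Your route is more elementary, dispensing with the Picard-operator framework and Lemma \ref{AGL} altogether, and it sidesteps the one delicate point of the paper's argument, namely the verification that $\mathfrak b^{\ast}$ is increasing, which rests on the sign of $\mathbb W_{\Phi}^{\mu}(\ell_2,\eta)-\mathbb W_{\Phi}^{\mu}(\ell_1,\eta)$ and deserves more care when $\mu<1$. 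To make your version airtight you should record two routine facts: that $\psi$ is continuous (so Corollary \ref{Gronwall2} applies), and that $\ell\mapsto\mathbb I_{\mathfrak m^{+}}^{\mu;\Phi}\psi(\ell)$ is nondecreasing for nonnegative nondecreasing $\psi$ (the substitution $s=\Phi(\ell)-\Phi(\eta)$ gives this immediately), which is what justifies taking the supremum over $[\mathfrak m,\ell]$ on the left while evaluating the right-hand side at $\ell$. Your closing alternative packaging via the increasing Picard operator $\mathbb T$ and Lemma \ref{AGL} is, in substance, exactly the proof the paper gives.
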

\begin{proof}
Let $\varepsilon>0$ and let $\tilde{\mathfrak {z}} \in \mathcal {Y}\cap\mathcal {X}$ be a function which satisfies the inequality \eqref{S1T2}, and denote the unique solution of equation \eqref{aa} by  $\mathfrak {z} \in \mathcal {Y}\cap\mathcal {X}$, that is,
 \begin{equation*}
\begin{cases}
{^{c}\mathbb{D}}_{\mathfrak {m}^{+}}^{\mu ;\Phi }\mathfrak {z} (\ell )+\varrho\, {^{c}\mathbb{D}}_{\mathfrak {m}^{+}}^{\kappa ;\Phi }\mathfrak {z} (\ell )=\mathbb{Q}\bigl(\ell ,\mathfrak {z}
(\ell ), \mathfrak {z}(\mathfrak {f}
(\ell ))\bigr),\;\ell \in [\mathfrak {m}, \mathfrak {n}], \\
\mathfrak {z} (\ell )= \tilde{\mathfrak {z}}(\ell ),\; \ell\in  [\mathfrak {m} -\sigma, \mathfrak {m}].
\end{cases}
   \end{equation*}
By Theorem \ref{banach}, we have  
 \begin{equation*}
 \mathfrak {z}(\ell)=\left\{
\begin{matrix}
\tilde{\mathfrak {z}}(\mathfrak {m})+\int_{\mathfrak {m}}^{\ell}\mathbb W_{\Phi}^{\mu}(\ell, \eta)\mathbb M_{\mu-\kappa,\mu}\bigl(-\varrho(\Phi(\ell)-\Phi(\eta))^{\mu-\kappa}\bigr)\\
\times\bigl\{\mathbb{Q}\bigl(\eta ,\mathfrak {z}
(\eta ), \mathfrak {z}(\mathfrak {f}
(\eta ))\bigr)\bigr\}\mathrm{d}\eta &,&\; \ell\in   [\mathfrak {m}, \mathfrak {n}],\\
\tilde{\mathfrak {z}}(\ell )&,& \;\ell\in [\mathfrak {m}- \sigma, \mathfrak {m}].
\end{matrix}\right.
   \end{equation*}
Note that, when $ \ell\in [\mathfrak {m}- \sigma, \mathfrak {m}]$, we have
\begin{align*}
|\tilde{\mathfrak {z}}(\ell)-\mathfrak {z}(\ell)|=0.
\end{align*}
On the other side,  for each $\ell \in [\mathfrak {m}, \mathfrak {n}]$ we obtain
\begin{align*}
&|\tilde{\mathfrak {z}}(\ell)-\mathfrak {z}(\ell)|\leq 
\left\vert\tilde{\mathfrak {z}}(\ell)-\tilde{\mathfrak {z}}(\mathfrak {m})-\int_{\mathfrak {m}}^{\ell}\mathbb W_{\Phi}^{\mu}(\ell, \eta)\mathbb M_{\mu-\kappa,\mu}\bigl(-\varrho(\Phi(\ell)-\Phi(\eta))^{\mu-\kappa}\bigr)\mathbb{Q}\bigl(\eta ,\tilde{\mathfrak {z}}
(\eta ), \tilde{\mathfrak {z}}(\mathfrak {f}
(\eta ))\bigr)\mathrm{d}\eta\right\vert\\
&+ \int_{\mathfrak {m}}^{\ell}\mathbb W_{\Phi}^{\mu}(\ell, \eta)\mathbb M_{\mu-\kappa,\mu}\bigl(-\varrho(\Phi(\ell)-\Phi(\eta))^{\mu-\kappa}\bigr)\bigl|\mathbb{Q}\bigl(\eta ,\tilde{\mathfrak {z}}
(\eta ), \tilde{\mathfrak {z}}(\mathfrak {f}
(\eta ))\bigr)-\mathbb{Q}\bigl(\eta ,\mathfrak {z}
(\eta ), \mathfrak {z}(\mathfrak {f}
(\eta ))\bigr)\bigr|\mathrm{d}\eta.
\end{align*}
Using (H2) and Lemma \ref{LHUS}, we can arrive at
\begin{align}\label{OPRP1}
|\tilde{\mathfrak {z}}(\ell)-\mathfrak {z}(\ell)| \leq& \varepsilon\mathbb M_{\mu}\bigl((\Phi(\ell)-\Phi(\mathfrak {m}))^{\mu}\bigr)+\mathbb L_{\mathbb Q}\int_{\mathfrak {m}}^{\ell}\frac{\mathbb W_{\Phi}^{\mu}(\ell, \eta)}{\Gamma(\mu)}\bigl(|\tilde{\mathfrak {z}}(\eta)-\mathfrak {z}(\eta)|+|\tilde{\mathfrak {z}}(\mathfrak {f}
(\eta ))-\mathfrak {z}(\mathfrak {f}
(\eta ))|\bigr)\mathrm{d}\eta.
\end{align}
Now, for each $\mathfrak {b}\in C( [\mathfrak {m} -\sigma, \mathfrak {n}], \mathbb R_{+})$, we defne an operator $\mathbb {S}: C( [\mathfrak {m} -\sigma, \mathfrak {n}], \mathbb R_{+}) \to C( [\mathfrak {m} -\sigma, \mathfrak {n}], \mathbb R_{+})$ by
 \begin{equation}\label{operaS}
\mathbb {S} \mathfrak {b}(\ell)=
\begin{cases}
\varepsilon\mathbb M_{\mu}\bigl((\Phi(\ell)-\Phi(\mathfrak {m}))^{\mu}\bigr)+\mathbb L_{\mathbb Q}\int_{\mathfrak {m}}^{\ell}\frac{\mathbb W_{\Phi}^{\mu}(\ell, \eta)}{\Gamma(\mu)}\bigl(\mathfrak {b}(\eta)+\mathfrak {b}(\mathfrak {f}
(\eta ))\bigr)\mathrm{d}\eta,\; \ell\in   [\mathfrak {m}, \mathfrak {n}],\\
0, \quad \ell\in [\mathfrak {m}- \sigma, \mathfrak {m}].
\end{cases}
   \end{equation}
We prove that $\mathbb {S}$ is a Picard operator. Let $\mathfrak {b}_{1}, \mathfrak {b}_{2}\in C( [\mathfrak {m} -\sigma, \mathfrak {n}], \mathbb R_{+})$. Then,
\[
|\mathbb {S} \mathfrak {b}_{2}(\ell)-\mathbb {S} \mathfrak {b}_{1}(\ell)|=0, \;\ell\in [\mathfrak {m}- \sigma, \mathfrak {m}].
\]
Now, for any $\ell\in   [\mathfrak {m}, \mathfrak {n}]$, it follows from (H1) and (H2) that
 \begin{align*}
|\mathbb {S} \mathfrak {b}_{2}(\ell)-\mathbb {S} \mathfrak {b}_{1}(\ell)|
&\leq \frac{2\mathbb L_{\mathbb Q}}{\beta}\left[\mathbb M_{\mu}
\bigl(\beta(\Phi(\ell)-\Phi(\mathfrak {m}))^{\mu}\bigr)-1\right]\|\mathfrak {b}_{2}-\mathfrak {b}_{1}\|_{\mathcal X, \mathfrak B, \mu},
 \end{align*}
which leads to
 \begin{align*}
 \|\mathbb {S}\mathfrak {b}_{2}-\mathbb {S}\mathfrak {b}_{1}\|_{\mathcal Y, \mathfrak B, \mu}
&\leq \frac{2\mathbb L_{\mathbb Q}}{\beta}\|\mathfrak {b}_{2}-\mathfrak {b}_{1}\|_{\mathcal Y, \mathfrak B, \mu}.
 \end{align*}
Choosing $\beta>0$  such that $\frac{2\mathbb L_{\mathbb Q}}{\beta}<1$, we have that  $\mathbb S$ is a contraction with respect to Bielecki's norm $\| \cdot \|_{\mathcal Y, \mathfrak B, \mu}$. According to Banach fxed point theorem, we deduce that $\mathbb S$ is a Picard operator and $\mathbb F_{\mathbb S}=\{\mathfrak {b}^{\ast}\}$. Thus
 \begin{equation}\label{PSO}
\mathfrak {b}^{\ast}(\ell)=
\varepsilon\mathbb M_{\mu}\bigl((\Phi(\ell)-\Phi(\mathfrak {m}))^{\mu}\bigr)+\mathbb L_{\mathbb Q}\int_{\mathfrak {m}}^{\ell}\frac{\mathbb W_{\Phi}^{\mu}(\ell, \eta)}{\Gamma(\mu)}\bigl(\mathfrak {b}^{\ast}(\eta)+\mathfrak {b}^{\ast}(\mathfrak {f}
(\eta ))\bigr)\mathrm{d}\eta, \; \ell\in   [\mathfrak {m}, \mathfrak {n}].
   \end{equation}
   Next, we show that $\mathfrak {b}^{\ast}$ is increasing.  For this end, let Let any $\ell_{1}, \ell_{2}\in  [\mathfrak {m} -\sigma, \mathfrak {n}]$. If $\ell_{1}, \ell_{2}\in  [\mathfrak {m}-\sigma, \mathfrak {m}]$ with $\ell_{1} < \ell_{2}$, then $\mathfrak {b}^{\ast}(\ell_{2})- \mathfrak {b}^{\ast}(\ell_{1}) = 0$, and if $\ell_{1}, \ell_{2}\in  [\mathfrak {m}, \mathfrak {n}]$ provided that $\ell_{1} < \ell_{2}$. Denote $\nu= \min_{\eta\in  [\mathfrak {m}, \mathfrak {n}]}\bigl(\mathfrak {b}^{\ast}(\eta)+\mathfrak {b}^{\ast}(\mathfrak {f}
(\eta ))\bigr)$. Then,
 \begin{align*}
\mathfrak {b}^{\ast}(\ell_{2})-\mathfrak {b}^{\ast}(\ell_{1})&=
\varepsilon\left(\mathbb M_{\mu}\bigl((\Phi(\ell_{2})-\Phi(\mathfrak {m}))^{\mu}\bigr)-\mathbb M_{\mu}\bigl((\Phi(\ell_{1})-\Phi(\mathfrak {m}))^{\mu}\bigr)\right)\\
&+\mathbb L_{\mathbb Q}\int_{\mathfrak {m}}^{\ell_{1}}\frac{\mathbb W_{\Phi}^{\mu}(\ell_{2}, \eta)-\mathbb W_{\Phi}^{\mu}(\ell_{1}, \eta)}{\Gamma(\mu)}\bigl(\mathfrak {b}^{\ast}(\eta)+\mathfrak {b}^{\ast}(\mathfrak {f}
(\eta ))\bigr)\mathrm{d}\eta\\
&+\mathbb L_{\mathbb Q}\int_{\ell_{1}}^{\ell_{2}}\frac{\mathbb W_{\Phi}^{\mu}(\ell_{2}, \eta)}{\Gamma(\mu)}\bigl(\mathfrak {b}^{\ast}(\eta)+\mathfrak {b}^{\ast}(\mathfrak {f}
(\eta ))\bigr)\mathrm{d}\eta\\
&\geq \varepsilon\left(\mathbb M_{\mu}\bigl((\Phi(\ell_{2})-\Phi(\mathfrak {m}))^{\mu}\bigr)-\mathbb M_{\mu}\bigl((\Phi(\ell_{1})-\Phi(\mathfrak {m}))^{\mu}\bigr)\right)\\
&+\frac{\nu\mathbb L_{\mathbb Q}}{\Gamma(\mu+1)}\bigl((\Phi(\ell_{2})-\Phi(\mathfrak {m}))^{\mu}-(\Phi(\ell_{1})-\Phi(\mathfrak {m}))^{\mu}\bigr)\\
&>0.
   \end{align*}
This means that $\mathfrak {b}^{\ast}$ is  increasing. Keeping in mind (H1) we arrive  to 
$\mathfrak {b}^{\ast}(\mathfrak {f}(\ell ))\leq  \mathfrak {b}^{\ast}(\ell),\, \ell \in [\mathfrak {m}, \mathfrak {n}]$. Therefore, Eq. \eqref{PSO} reduces to
\begin{equation*}
\mathfrak {b}^{\ast}(\ell)\leq
\varepsilon\mathbb M_{\mu}\bigl((\Phi(\ell)-\Phi(\mathfrak {m}))^{\mu}\bigr)+\frac{2\mathbb L_{\mathbb Q}}{\Gamma(\mu)}\int_{\mathfrak {m}}^{\ell}\mathbb W_{\Phi}^{\mu}(\ell, \eta)\mathfrak {b}^{\ast}(\eta)\mathrm{d}\eta, \; \ell\in   [\mathfrak {m}, \mathfrak {n}].
   \end{equation*}
Applying Corollary \ref{Gronwall2} (the $\Phi$-fractional Gronwall's inequality Eq. \eqref{GI2}),  to above inequality with $\mathfrak {c}_1(\ell)=\mathfrak {b}^{\ast}(\ell),  \mathfrak {c}_2(\ell)= \varepsilon\mathbb M_{\mu}\bigl((\Phi(\ell)-\Phi(\mathfrak {m}))^{\mu}\bigr)$ and $\mathfrak {c}_3(\ell)=\frac{2\mathbb L_{\mathbb Q}}{\Gamma(\mu)}.$ Since  $\mathfrak {c}_2(\ell)$ is nondecreasing function on $\Omega$, we conclude
that
\begin{align}\label{GRS1}
\mathfrak {b}^{\ast}(\ell) &\leq  \varepsilon\mathbb M_{\mu}\left(2\mathbb {L}_{\mathbb Q}\bigl(\Phi (\ell)-\Phi (\mathfrak {m})\bigr)^{\mu}\right)\mathbb M_{\mu}\bigl((\Phi(\ell)-\Phi(\mathfrak {m}))^{\mu}\bigr)\nonumber\\
&\leq \varepsilon\mathbb M_{\mu}\left(2\mathbb {L}_{\mathbb Q}\bigl(\Phi (\mathfrak {n})-\Phi (\mathfrak {m})\bigr)^{\mu}\right)\mathbb M_{\mu}\bigl((\Phi(\ell)-\Phi(\mathfrak {m}))^{\mu}\bigr)\nonumber\\
&=c_{\mathbb M_{\mu}}\varepsilon\mathbb M_{\mu}\bigl((\Phi(\ell)-\Phi(\mathfrak {m}))^{\mu}\bigr),\quad \ell \in \Omega,
\end{align}
where $c_{\mathbb M_{\mu}}=\mathbb M_{\mu}\left(2\mathbb {L}_{\mathbb Q}\bigl(\Phi (\mathfrak {n})-\Phi (\mathfrak {m})\bigr)^{\mu}\right).$

In particular, if $\mathfrak {b}= |\tilde{\mathfrak {z}}-\mathfrak {z}|$, from \eqref{OPRP1}, $\mathfrak {b}(\ell)\leq  \mathbb {S}\mathfrak {b}(\ell)$ and applying the abstract Gronwall lemma (Lemma \ref{AGL}) we obtain $\mathfrak {b}(\ell) \leq \mathfrak {b}^{\ast}(\ell)$, where $\mathbb {S}$ is an increasing Picard operator. Combining this fact  with \eqref{GRS1}, it yelds that
\[
\begin{cases}\bigl\vert \tilde{\mathfrak {z}}(\ell)-\mathfrak {z}(\ell)\bigr\vert=0, \quad \ell\in [\mathfrak {m}- \sigma, \mathfrak {m}],\\
\bigl\vert \tilde{\mathfrak {z}}(\ell)-\mathfrak {z}(\ell)\bigr\vert \leq c_{\mathbb M_{\mu}}\varepsilon \mathbb M_{\mu}\bigl((\Phi(\ell)-\Phi(\mathfrak {m}))^{\mu}\bigr),\quad  \ell\in [\mathfrak {m}, \mathfrak {n}].
\end{cases}
\]
Thus, the problem \eqref{aa} is UHML stable. 
\end{proof}
\section{An Example}\label{Sec5}
In this fragment, we present an example where we apply both of  Theorems \ref{banach} and \ref{USMT} to some particular cases.\begin{example}Let us consider problem \eqref{aa} with specific data:
\begin{align}\label{data} 
\begin{aligned}
\mu=&0.5, \; \kappa=0.45,\; \varrho=1, \; \mathfrak {f}
(\ell )=\ell-\sigma.
\end{aligned}
\end{align} 
In order to illustrate Theorems \ref{banach} and \ref{USMT}, we take 
\begin{align}\label{dataf}
\mathbb Q(\ell, \mathfrak {z}(\ell), \mathfrak {z}(\mathfrak {f}
(\ell )))=&\frac{\sin \ell}{2}\left(\mathfrak {z}(\ell)+\sqrt{1+\mathfrak {z}^2(\ell)}\right) +\sin( \mathfrak {z}(\ell-\sigma)),
\end{align}
in \eqref{aa}. 
Obviously, the hypotheses ( \rm{H1}) and (\rm{H2}) hold with $\mathbb {L}_{\mathbb Q} =1$. It follows from Theorem \ref{banach} that the
problem \eqref{aa} with the data \eqref{data} and  \eqref{dataf}  has a unique solution  in $C( [\mathfrak {m} -\sigma, \mathfrak {n}], \mathbb R)\cap C( [\mathfrak {m}, \mathfrak {n}], \mathbb R).$ Also, by Theorem \ref{USMT}  the corresponding problem is UHML stable.
\end{example}
\begin{remark}  It is worth noting that in the previous example $\beta$ can be determined according to the assumptions of Theorem \ref{banach}. for example, we can choose $\beta=\mathbb {L}_{\mathbb Q}+1.$
\end{remark}
{\small

}
\end{document}